\date{}
\newtheorem{theorem}{Theorem}
\numberwithin{theorem}{section}
\newtheorem{proposition}[theorem]{Proposition}
\newtheorem{lemma}[theorem]{Lemma}
\newtheorem{corollary}[theorem]{Corollary}
\newtheorem{example}[theorem]{Example}
\newcommand{\RR}{\mathbb{R}}
\newcommand{\QQ}{\mathbb{Q}}
\newcommand{\PP}{\mathbb{P}}
\newcommand{\CC}{\mathbb{C}}
\newcommand{\NN}{\mathbb{N}}
\definecolor{g4}{rgb}{0,0.4,0}
\newcommand{\arr}[2]{\begin{array}{#1} #2\end{array}}
\newcommand{\mat}[2]{\left(\!\!\arr{#1}{#2}\!\!\right)}
\newcommand{\utab}[2]{\left(\!\!\!\arr{#1}{#2}\!\!\!\right)}
\newcommand{\M}[1]{#1}
\newcommand{\V}[1]{\M{#1}}
\newcommand{\diag}[1]{\mbox{diag}{#1}}
\newcommand{\trace}[1]{\mbox{trace}{#1}}
\title{\textbf{Distortion Varieties}}
\author{Joe Kileel, Zuzana Kukelova, Tomas Pajdla and Bernd Sturmfels}
\begin{document}
\maketitle

\begin{abstract}
\noindent
The distortion varieties of a given projective variety are parametrized by
duplicating coordinates and multiplying them with monomials.
We study their degrees and defining equations.
Exact formulas are obtained for the case of one-parameter distortions.
These are based on Chow polytopes and Gr\"obner bases.
Multi-parameter distortions are studied using tropical geometry.
The motivation for distortion varieties comes from
multi-view geometry in computer vision. Our theory
furnishes a new framework for formulating and solving
minimal problems for camera models with image distortion. 
\end{abstract}

\section{Introduction}
This article introduces  a construction in algebraic  geometry that is motivated by 
multi-view geometry in computer vision. In that field, one thinks of a camera as
a  linear projection $\PP^3 \dashrightarrow \PP^2$,
and a  model is a projective variety  $X \subset \PP^n$ that represents the
relative positions of two or more such cameras. The data are correspondences of
image points in $\PP^2$. These define a  linear subspace $L \subset \PP^n$, and the task is to 
compute the real points in the intersection $L \cap X$ as fast and accurately as possible.
See \cite[Chapter 9]{HZ-2003} for a textbook introduction.

A model for cameras with image distortion allows for an additional unknown parameter $\lambda$.
Each coordinate of $X$ gets multiplied by a polynomial in $\lambda$
whose coefficients also depend on the data. 
We seek to estimate both $\lambda$ and the point in $X$,
where the data now specify a subspace
$L'$ in a larger projective space $\PP^N$. The distortion variety $X'$
lives in that $\PP^N$, it satisfies ${\rm dim}(X') = {\rm dim}(X)+1$,
and the task is to compute  $L' \cap X'$  in $\PP^N$ fast and accurately.

We illustrate the idea of distortion varieties for the basic scenario
in two-view geometry.

\begin{example} \label{ex:intoP14} \rm
The relative position of two uncalibrated cameras is expressed
by a $3 {\times} 3$-matrix $x = (x_{ij})$ of rank~$2$, known as the {\em fundamental matrix}.
Let $n=8$ and write $F$ for the hypersurface in $\PP^8$ defined by
 the $3 \times 3$-determinant.
Seven (generic) image correspondences in two views determine a line $L$ in $\PP^8$,
and one rapidly computes the three points in $L \cap F$.

The {\em $8$-point radial distortion problem} \cite[Section 7.1.3]{Kukdiss}
is modeled as follows in our setting. We duplicate
the coordinates in the last row and last column of $x$, and we~set
\begin{equation}
\label{eq:intoP14}
\begin{matrix}
\left(
 x_{11} : x_{12} : x_{13} : y_{13}:
 x_{21} : x_{22} : x_{23} : y_{23}:
 x_{31} : y_{31} : x_{32} : y_{32} : x_{33} : y_{33} : z_{33}\right) \,\,\, \,
 = \qquad \quad \qquad
\\  \left(
x_{11}: x_{12}: x_{13} :x_{13} \lambda: 
x_{21}: x_{22}: x_{23} :x_{23} \lambda: 
x_{31}: x_{31} \lambda: 
x_{32}: x_{32} \lambda:
x_{33}: x_{33} \lambda : x_{33} \lambda^2 \right).
\end{matrix}
\end{equation}
Here $N = 14$. The distortion variety
$F'$ is the closure of the set of matrices (\ref{eq:intoP14})
where $x \in F$ and $\lambda \in \CC$.
 The variety $F'$ has dimension $8$ and degree $16$ in $\PP^{14}$,
 whereas $F$ has dimension $7$ and degree $3$ in $\PP^8$.
To estimate both $\lambda$ and the relative camera positions,
we now need eight image correspondences. These data
specify a linear space $L'$ of dimension $6$ in $\PP^{14}$.
The task in the computer vision application is to rapidly compute
the $16$ points in $L' \cap F'$.

The prime ideal of the distortion variety $F'$ is minimally
generated by $18$ polynomials in the $15$ variables.
First, there are $15$ quadratic binomials, namely the
$2 \times 2$-minors of matrix
\begin{equation}
\label{eq:hankel26}
\begin{pmatrix}
x_{13} & x_{23} & x_{31}  & x_{32}  &  x_{33} & y_{33} \\
y_{13} & y_{23} & y_{31}  & y_{32}  &  y_{33} & z_{33}
\end{pmatrix}.
\end{equation}
Note that this matrix has rank $1$ under the substitution (\ref{eq:intoP14}).
Second, there are three cubics
\begin{equation}
\label{eq:threedets}
\begin{matrix}
x_{11} x_{22} x_{33} 
- x_{11} x_{23} x_{32}
- x_{12} x_{21} x_{33}
+ x_{12} x_{23} x_{31}
+ x_{13} x_{21} x_{32}
- x_{13} x_{22} x_{31}, \\
x_{13} x_{22} y_{31} - x_{12} x_{23} y_{31} - x_{13} x_{21} y_{32} +
x_{11} x_{23} y_{32} + x_{12} x_{21} y_{33} - x_{11} x_{22} y_{33}, \\
x_{22} y_{13} y_{31} - x_{12} y_{23} y_{31} - x_{21} y_{13} y_{32}
+ x_{11} y_{23} y_{32} + x_{12} x_{21} z_{33} - x_{11} x_{22} z_{33}.
\end{matrix}
\end{equation}
These three $3 \times 3$-determinants replicate the equation that defines
the original model $F$.
\hfill $\diamondsuit $
\end{example}

This paper is organized as follows.
Section \ref{sec2} introduces the relevant concepts and definitions from
computer vision and algebraic geometry.
We present camera models with image distortion,
with focus on distortions with respect to a
single parameter $\lambda$. 
The resulting distortion varieties $X_{[u]}$ live in the
rational normal scroll $\mathcal{S}_u$, where
 $u = (u_0,u_1,\ldots,u_n)$ is a vector
of non-negative integers. This {\em distortion vector}  indicates that
the coordinate $x_i$ on $\PP^n$ is replicated $u_i$ times when passing to $\PP^N$.
In Example~\ref{ex:intoP14} we have
$u = (0,0,1, 0,0,1, 1,1,2)$ and $\mathcal{S}_u$
is the $9$-dimensional rational normal scroll defined by the $2 \times 2$-minors of (\ref{eq:hankel26}).

Our results on one-parameter distortions of arbitrary varieties are stated and proved in Section \ref{sec3}.
Theorem \ref{thm:degdis} expresses
 the degree of $X_{[u]}$ in terms of the Chow polytope of $X$.
Theorem \ref{thm:mingens} derives
ideal generators for $X_{[u]}$ from a  Gr\"obner basis of $X$.
These results explain what we observed in Example \ref{ex:intoP14},
namely  the degree $16$
and the equations in (\ref{eq:hankel26})-(\ref{eq:threedets}).

Section \ref{sec4} deals with multi-parameter distortions.
We first derive various camera models that are useful
for applications, and we then present the relevant algebraic geometry.

Section \ref{sec5} is concerned with a concrete application
to solving minimal problems in computer vision.
We focus on the distortion variety f+E+$\lambda$ 
of degree $23$ derived in Section~\ref{sec2}.

\section{One-Parameter Distortions}
\label{sec2}

This section has three parts. First, we derive
the relevant camera models from computer vision.
Second, we introduce the distortion varieties $X_{[u]}$ of
an arbitrary projective variety $X$.
And, third, we study the distortion varieties
for the camera models from the first part.

\subsection{Multi-view geometry with image distortion}
\label{sec21}

A {\em perspective camera} in computer vision \cite[pg 158]{HZ-2003}
is a linear projection $\PP^3 \dashrightarrow \PP^2$. The $3 \times 4$-matrix
that represents this map is written as
$\,\M{K} \cdot \mat{c}{\M{R}\,|\,\V{t}} $
where $\M{R} \in  \textup{SO}(3)$, $\V{t} \in \RR^3$,
and $\M{K}$ is an  upper-triangular $3 \times 3$ matrix known as the calibration matrix.
This transforms a point $\V{X}
\in  \PP^3$ from the world Cartesian coordinate  system to  the camera
Cartesian  coordinate  system.  Here, 
we usually normalize homogeneous coordinates
on $\PP^3$ and $\PP^2$ so that the last coordinate equals~$1$.
With this, points in $\RR^3$ map to $\RR^2$ under the action of the camera.

The following  camera  model  was introduced in~\cite[Eqn~3]{Micusik-PAMI-2006}
to deal with image distortions:
\begin{equation}
\quad  \alpha\,    \mat{c}{\M{R}\,|\,\V{t}}    \,    \V{X}    \,\,=\,\,
 \mat{l}{h(\|\M{A}\,U+\V{b}\|)\,(\M{A}\,U+\V{b})\\
 g(\|\M{A}\,U+\V{b}\|)}
 \qquad \mbox{for some}   \,\,\, \alpha \in \RR\backslash \{0\}.
\label{eq:x=(h(AU+b);g)}
\end{equation}
The two functions $h\colon\RR\to\RR$ and  $g\colon\RR\to\RR$ 
represent the distortion. The invertible matrix
 $\M{A} \in  \RR^{2 \times 2}$  and the 
vector $\V{b}  \in \RR^2$ are used to
transform  the image  point  $U \in  \RR^2$  
into the  image Cartesian  coordinate system.
The perspective camera  in the previous paragraph is obtained by
setting  $h=g=1$  and taking the calibration matrix $\M{K}$ to be
the inverse of $\mat{cc}{\M{A}&\V{b}\\0\,  0   &  1}$.

Micusik and Pajdla \cite{Micusik-PAMI-2006} studied  applications to fish eye  lenses  as  well as  catadioptric
cameras. In this context they found that it often suffices to fix $h=1$ and to take
a quadratic polynomial for $g$. For the following derivation we choose
 $g(t) = 1  + \mu t^2 $, where
$\mu$ is an unknown parameter. We also assume that the
calibration matrix has the diagonal form
 $\M{K}   =    \diag{ \bigl[ f,f,1 \bigr]}$.
 If we set $\,\lambda = \mu/f^2\,$ then the model  (\ref{eq:x=(h(AU+b);g)}) simplifies to
 \begin{equation}
\quad  \alpha\,    \mat{c}{\M{R}\,|\,\V{t}}    \,    \V{X}    \,\,=\,\, \M{K}^{-1}  \mat{l}{ \quad U \\
 1 + \lambda \| U \|^2  }
 \qquad \mbox{for some}   \,\,\, \alpha \in \RR\backslash \{0\}.
\label{eq:second}
\end{equation}

Let us now analyze two-view geometry for the model (\ref{eq:second}).
The quantity $\,\lambda = \mu/f^2$ is our distortion parameter.
Throughout the discussion in Section \ref{sec2} there is only one such parameter.
Later, in Section \ref{sec4}, there will be two or more different distortion parameters.

Following \cite[\S 9.6]{HZ-2003} we represent 
two camera matrices 
$  \mat{c}{\M{R}_1 \,|\,\V{t}_1}$  and
$  \mat{c}{\M{R}_2 \,|\,\V{t}_2}$  by
their {\em  essential matrix} $\,\M{E}$. This $3 \times 3$-matrix
has rank $2$ and  satisfies  the  \textit{Demazure equations}.
The equations were first derived in~\cite{Demazure88}; they take the matrix form
$2\,\M{E}\,\M{E}^\top\M{E}-\trace{(\M{E}\,\M{E}^\top)}\M{E}=0$.
For a pair $(U_1,U_2)$ of corresponding points in two images,
the {\em epipolar constraint} now reads
\begin{eqnarray}
0 \,=\, \mat{c}{\M{A} U_2\\1+\mu \|\M{A} U_2\|^2}^{\!\! \top}
\!\!\M{E}\,\mat{c}{\M{A} U_1\\ \! 1+\mu\|\M{A}U_1\|^2}
\,=\, \mat{c}{U_2\\ \! 1+\lambda\|U_2\|^2 \! }^{\!\! \top}\!\!\M{K}^{-\top}\M{E}\,\M{K}^{-1}
\mat{c}{U_1\\1+\lambda\|U_1\|^2 \! }.
\label{eq:0=x2TiKTEiKx1}
\end{eqnarray}
In this way, the essential matrix $\V{E}$ expresses a necessary condition for
two points $U_1$ and $U_2$ in the image planes to be pictures of the same world point.
The {\em fundamental matrix} is obtained from the
essential matrix and the calibration matrix:
\begin{equation}
\label{eq:fundamentalmatrix}
\M{F}  \, =\,
\mat{ccc}{f_{11}&f_{12}&f_{13}\\f_{21}&f_{22}&f_{23}\\f_{31}&f_{32}&f_{33}}
\,\,\, = \,\,\, \M{K}^{-\top}\M{E}\,\M{K}^{-1} .
\end{equation}
Using the coordinates of $U_1 = [u_1,v_1]^\top$ and $U_2 = [u_2,v_2]^\top$,
the epipolar constraint (\ref{eq:0=x2TiKTEiKx1})~is
\begin{small}
\begin{eqnarray}
 0\! &\!=\!&
u_2u_1f_{11}+
u_2v_1f_{12}+
u_2f_{13}+
u_2\|U_1\|^2\lambda f_{13}+
v_2u_1f_{21}+
v_2v_1f_{22}+
v_2f_{23}+ 
v_2\|U_1\|^2\lambda f_{23} + 
 \nonumber\\ & & 
u_1f_{31} +
u_1\|U_2\|^2\lambda f_{31}+
v_1f_{32}+
v_1\|U_2\|^2\lambda f_{32}+
f_{33} +
( \|U_1\|^2 {+}
\|U_2\|^2) \lambda f_{33}
+ \|U_1\|^2\|U_2\|^2\lambda^2 f_{33}.
 \label{eq:0=u1u2f11+u2v1f12+...} 
 \nonumber
\end{eqnarray}
\end{small}
This is a sum of $15$ terms. The corresponding
 monomials in the unknowns form the vector
\begin{equation}
\label{eq:15monomials}
\V{m}^\top = 
\left[
f_{11},f_{12},
f_{13},
f_{13}\lambda,
f_{21},f_{22},
f_{23},f_{23}\lambda,
f_{31},f_{31}\lambda,
f_{32},f_{32}\lambda,
f_{33},f_{33}\lambda,f_{33}\lambda^2
\right].
\end{equation}
The $15$ coefficients are real numbers given by the data. The coefficient vector $c$ is equal to
\begin{small}
$$
\left[u_2u_1,
u_2v_1,
u_2,
u_2\|U_1\|^2 \!\! ,
v_2u_1,
v_2v_1,
v_2, 
v_2\|U_1\|^2\!\!,
u_1,
u_1\|U_2\|^2\!\!,
v_1,
v_1\|U_2\|^2\!\!,
1, 
\|U_1\|^2 {+} \| U_2\|^2\!\!,
\|U_1\|^2\|U_2\|^2
\right]^{\! \top}_.
$$
\end{small}
With this notation, the epipolar constraint given by one point correspondence is simply
\begin{equation}
\label{eq:cdotm}
 \V{c}^\top\V{m}  \,\, = \,\,0 .
 \end{equation}

At this stage we have derived the distortion variety in
Example \ref{ex:intoP14}. Identifying $f_{ij}$
with the variables $x_{ij} $, the vector (\ref{eq:15monomials})
is precisely the same as  that in (\ref{eq:intoP14}).
This is the parametrization of the rational normal scroll
$\mathcal{S}_u$ in $\PP^{14}$ where
$u = (0,0,1,0,0,1,1,1,2)$. The set of fundamental matrices is dense in the
hypersurface $X = \{{\rm det}(\V{F})=0\}$ in $\PP^8$.
Its  distortion variety $X_{[u]}$ has dimension $8$ and degree $16$ in $\PP^{14}$.
Each point correspondence $(U_1,U_2)$ determines a
vector $\V{c}$ and hence a hyperplane in $\PP^{14}$.
The constraint (\ref{eq:cdotm}) means  intersecting $X_{[u]}$ with that hyperplane.
Eight point correspondences determine a $6$-dimensional
linear space in $\PP^{14}$. Intersecting $X_{[u]}$ with that linear subspace
is the same as solving the $8$-point radial distortion problem in \cite[Section 7.1.3]{Kukdiss}.
The expected number of complex solutions is $16$.

\subsection{Scrolls and Distortions}

This subsection introduces the algebro-geometric objects  studied in this paper.
We fix a non-zero vector $u =  (u_0,u_1,\ldots,u_n) \in \NN^{n+1}$ of non-negative
integers, we abbreviate $|u| =  u_0+u_1 +  \cdots + u_n$, and we set  $N =|u|+n$.
The  {\em rational normal scroll} $\,\mathcal{S}_{u} $ is
a  smooth   projective  variety
of dimension $n+1$ and degree $|u|$ in $\PP^N$. It 
has the parametric representation
\begin{equation}
\label{eq:upara} \!\!
\bigl( 
x_0:x_0 \lambda: x_0\lambda^2 : \cdots: x_0 \lambda^{u_0}:
x_1:x_1 \lambda: x_1 \lambda^2 : \cdots: x_1 \lambda^{u_1}:\,\cdots:
x_n:x_n \lambda: \cdots: x_n \lambda^{u_n} \bigr). 
\end{equation}
The coordinates are monomials, so
the scroll  $\mathcal{S}_u$  is also  a toric  variety \cite{CLS}.
Since ${\rm degree}(\mathcal{S}_u) = |u|$ equals ${\rm codim}(\mathcal{S}_u) +1 = N-n+1$,
it is a variety of minimal  degree \cite[Example 1.14]{Har}.   
  
Restriction  to the  coordinates  $(x_0:x_1:\cdots :  x_n)$ defines  a
rational map  $\mathcal{S}_u \dashrightarrow \PP^n$.  This  is a toric
fibration \cite{DR}.  Its fibers are curves parametrized by $\lambda$.
The base  locus is a  coordinate subspace $\PP^n \subset  \PP^N$.  Its
points  have support  on  the last coordinate in  each  of the  $n+1$
groups. For instance,  in Example \ref{ex:123} the base locus
is the $\PP^2$ defined by $ \langle a_0, b_0, b_1, c_0,c_1, c_2 \rangle$
in $\PP^8$.

The prime ideal  of  the scroll $\mathcal{S}_u$  is
generated by the  $2 \times 2$-minors of a $2  \times |u|$-matrix of
unknowns  that is  obtained by  concatenating Hankel  matrices on  the
blocks of unknowns;~see~\cite[Lemma  2.1]{EH}, \cite{Pet}, and Example \ref{ex:123}
below.  For a textbook reference see  \cite[Theorem 19.9]{Har}.

We now consider  an arbitrary projective variety $X$  of dimension $d$
in $\PP^n$.  This is the  underlying model in some
application, such as computer vision.  We define the {\em  distortion variety of level $u$},
denoted $X_{[u]}$,  to be the closure of the preimage of  $X$ under the map
$\mathcal{S}_{u} \dashrightarrow  \PP^n$.  The fibers of  this map are
curves.  The distortion variety $X_{[u]}$ lives in $\PP^N$. It
has dimension~$d+1$.  Points on $X_{[u]}$ represent  points on $X$
whose  coordinates  have  been   distorted  by  an  unknown parameter
$\lambda$.  The parametrization above is  the rule for the distortion.
In other words, $X_{[u]}$ is the closure of the image of the regular map 
$X \times \CC \rightarrow \PP^N$ given by (\ref{eq:upara}).

Each     distortion    variety     represents    a     {\em    minimal
  problem}~\cite{Kukdiss} in polynomial  systems solving.  Data points
define linear constraints  on $\PP^N$, like (\ref{eq:cdotm}).
 Our problem is  to solve $d+1$
such linear equations on $X_{[u]}$. The number of complex solutions is
the  degree  of  $X_{[u]}$. 
A simple bound for that degree is stated in
Proposition \ref{prop:degdis}, and an exact formulas can be found
in Theorem \ref{thm:degdis}.
Of course, in applications  we are primarily interested in  the  real solutions.  

We already saw one example of a distortion variety in Example~\ref{ex:intoP14}.
In the following example, we discuss
some surfaces in $\PP^N$ that arise as distortion varieties of plane curves.

\begin{example} \rm \label{ex:123}
Let  $n   =  2  $   and  $u=(1,2,3)$.   The  rational   normal  scroll
is a $3$-dimensional smooth toric variety in $\PP^8 $. Its implicit equations
are  the $2 \times 2 $-minors of the $2 \times 6$-matrix
\begin{equation}
\label{eq:hankel26b}
\begin{pmatrix}
a_0 & b_0 & b_1 & c_0 & c_1 & c_2 \\
a_1 & b_1 & b_2 & c_1 & c_2 & c_3 
\end{pmatrix}.
\end{equation}
This is the ``concatenated Hankel matrix'' mentioned above.
Its pattern  generalizes to all $u$.

Let $X$  be a general curve  of degree $d$ in  $\PP^2$. The distortion
variety $X_{[u]}$ is  a surface of degree $5d$ in  $\PP^8$.  Its prime
ideal is generated by the  $15$ minors of~(\ref{eq:hankel26b}) together
with  $d+1$ polynomials  of degree  $d$. These  are obtained  from the
ternary form that  defines $X$ by the distortion process in
Theorem \ref{thm:mingens}. For special curves $X$, the degree of $X_{[u]}$ may
drop below $5d$.  For instance, given a line $X = V(\lambda a + \mu b
+ \nu c)$ in $\PP^2$, the  distortion surface $X_{[u]}$ has degree $5$
if $\lambda \not= 0$,  it has degree $4$ if $\lambda = 0$  but $\mu \not= 0$,
and it has degree $3$ if $\lambda=\mu =  0$.  For any curve $X$, the property
${\rm deg}(X_{[u]}) =  5 \cdot {\rm deg}(X)$ holds  after a coordinate
change in $\PP^2$.  If $X = \{p\}$  is a single point in $\PP^2 $ then
$X_{[u]}$  is a  curve in  $\PP^8$. It  has degree  $3$ unless  $p \in
V(c)$.
\hfill $\diamondsuit $
\end{example}
 
\subsection{Back to two-view geometry}
\label{subsec23}

In this subsection we describe several variants of
Example~\ref{ex:intoP14}. These highlight the role
of distortion varieties in two-view geometry.
We fix $n=8$, $N= 14$ and $u = (0,0,1, 0,0,1, 1,1,2)$ as above.
The scroll $\mathcal{S}_u$ is the image of the map (\ref{eq:intoP14})
and its ideal is generated by the $2 \times 2$-minors of (\ref{eq:hankel26}).
Each of the following varieties  
live in the space of $3 \times 3$-matrices $x = (x_{ij})$.

\begin{example}[Essential Matrices] \rm \label{ex:E}
We now write $E$ for the essential variety \cite{Demazure88, FKO}. It has
dimension  $5$ and degree $10$  in $\PP^8$. Its points $x$
are the essential matrices
in (\ref{eq:0=x2TiKTEiKx1}). The  ideal of  $E$  is generated by ten cubics,
 namely ${\rm det}(x)$ and the nine entries of
the  matrix $\,  2 x  x^T  x -  {\rm trace}(xx^T)  x$. The  distortion variety
$E_{[u]}$ has dimension  $6$ and degree $52$ in  $\PP^{14}$. Its ideal
is generated  by $15$ quadrics and  $18$ cubics, derived from  the ten
Demazure cubics. \hfill $\diamondsuit $
\end{example}

\begin{example}[Essential Matrices plus Two Equal Focal Lengths]
   \rm \label{ex:G}
Fix a diagonal  calibration matrix $k = {\rm  diag}(f,f,1)$, where $f$
is  a  new unknown.   We  define  $G$ to  be  the  closure in  $\PP^8$
of the  set of $3 \times 3$-matrices $x$  such that $kxk
\in E$  for some  $f$. To compute the ideal of the variety $G$,
we use the following lines of code in
the computer algebra system {\tt Macaulay2}~\cite{M2}:
\begin{verbatim}
R = QQ[f,x11,x12,x13,x21,x22,x23,x31,x32,x33,y13,y23,y33,y31,y32,z33,t];
X = matrix {{x11,x12,x13},{x21,x22,x23},{x31,x32,x33}}
K = matrix {{f,0,0},{0,f,0},{0,0,1}};
P = K*X*K;
E = minors(1,2*P*transpose(P)*P-trace(P*transpose(P))*P)+ideal(det(P));
G = eliminate({f},saturate(E,ideal(f)))
codim G, degree G, betti mingens G
\end{verbatim}
The output tells us that the  variety $G$ has  dimension $6$  and degree $15$,
and that $G$    is  the complete intersection of  two hypersurfaces in $\PP^8$, namely
the cubic ${\rm  det}(x)$ and the quintic
\begin{equation}
\label{eq:quintic}
 \begin{matrix}
x_{11} x_{13}^3 x_{31} +x_{13}^2 x_{21} x_{23} x_{31}+x_{11}x_{13} x_{23}^2 x_{31}
+x_{21}x_{23}^3 x_{31}-      x_{11} x_{13} x_{31}^3 -x_{21} x_{23}x_{31}^3  + \\
 x_{12} x_{13}^3 x_{32} +x_{13}^2 x_{22} x_{23} x_{32}+x_{12} x_{13} x_{23}^2 x_{32}
+x_{22} x_{23}^3 x_{32}-x_{12} x_{13} x_{31}^2 x_{32} {-} x_{12}^2 x_{13}^2 x_{33} \\
-x_{11} x_{13} x_{31} x_{32}^2- x_{21} x_{23} x_{31} x_{32}^2-x_{12}x_{13} x_{32}^3
{-}x_{22} x_{23} x_{32}^3{-}x_{11}^2 x_{13}^2 x_{33} {-}x_{22} x_{23} x_{31}^2 x_{32} \\
-2 x_{11} x_{13} x_{21}  x_{23} x_{33}-2 x_{12} x_{13} x_{22} x_{23} x_{33}
-x_{21}^2 x_{23}^2 x_{33}   -x_{22}^2 x_{23}^2 x_{33}
   +x_{11}^2 x_{31}^2 x_{33} \\ +x_{21}^2 x_{31}^2 x_{33}
  +2 x_{11}x_{12}x_{31} x_{32}x_{33} + 2 x_{21} x_{22} x_{31} x_{32}x_{33}
  +x_{12}^2 x_{32}^2 x_{33} + x_{22}^2 x_{32}^2 x_{33}.
      \end{matrix}
\end{equation}
The distortion variety $G_{[u]}$ is now computed by the following lines in {\tt Macaulay2}:
\begin{verbatim}
Gu = eliminate({t}, G +
 ideal(y13-x13*t,y23-x23*t,y31-x31*t,y32-x32*t,y33-x33*t,z33-x33*t^2))
codim Gu, degree Gu, betti mingens Gu
\end{verbatim}
We learn that $G_{[u]}$ has dimension $7$ and degree $68$ in $\PP^{14}$.
Modulo the $15$ quadrics for $\mathcal{S}_u$, its
ideal is generated by three cubics,
like those in (\ref{eq:threedets}), and five quintics, derived from (\ref{eq:quintic}).
\hfill $\diamondsuit$
\end{example}

\begin{example}[Essential Matrices plus One Focal Length Unknown] \rm
\label{ex:Gprime}
Let $G'$ denote the $6$-dimensional subvariety of $\PP^8$ defined by
the four maximal minors of the $ 3 {\times} 4$-matrix
\begin{equation}
\label{eq:threebyfour}
 \begin{pmatrix}
     \,x_{11} &  x_{12}  & x_{13} & \,\,x_{21} x_{31}+x_{22} x_{32} + x_{23} x_{33} \\
     \,x_{21} & x_{22} & x_{23}  &  -x_{11} x_{31}-x_{12} x_{32}-x_{13} x_{33} \\
     \,x_{31}  & x_{32} & x_{33} & 0
     \end{pmatrix}.
\end{equation}     
     This  variety has dimension $6$ and degree $9$ in $\PP^8$. It is defined by  one cubic and three quartics. 
The variety $G'$ is similar to $G$ in Example~\ref{ex:G}, but with the
     identity matrix as the calibration matrix for one of the two cameras.
     We can compute $G'$ by running the {\tt Macaulay2} code above but with
      the line     {\tt P = K*X*K} replaced with the line {\tt P = X*K}. This model was studied in
     \cite{Bujnak-etal-ICCV-2009-InProceedings}.
     
  The distortion variety $G'_{[u]}$  has dimension $7$  and degree
$42$ in  $\PP^{14}$.  Modulo the $15$ quadrics that define $\mathcal{S}_u$,  the ideal of $G'_{[u]}$
is minimally generated  by three
cubics and $11$ quartics. \hfill $\diamondsuit$
\end{example}

Table \ref{tab:comparison} summarizes the four models
we discussed in Examples~\ref{ex:intoP14}, \ref{ex:E}, \ref{ex:G} and \ref{ex:Gprime}.
The first column points to a reference in computer vision where this model has been studied.
The last column shows the upper bound for $ {\rm deg}(X_{[u]})$
given in Proposition \ref{prop:degdis}. That bound is not tight in any of our examples.
In the second half of the table we report the same data for
the four models when only  only one of the two cameras
undergoes radial distortion.

\begin{table}[htp]
\small
\centering
\begin{tabular}{|r||c|c|c|c|c|c|c|c|}
\hline
$u = \utab{c}{0,0,1,0,0,1,1,1,2}$ & Ref & $n$ & $N$ & $\!{\rm dim}(X)\!$ & $\! {\rm deg}(X) \!$ & 
$\! {\rm dim}(X_{[u]}) \!$ & $\! {\rm deg}(X_{[u]}) \!$ & 
Prop~\ref{prop:degdis} \\
\hline 
$F$ in Example \ref{ex:intoP14}:  \,\,
$\lambda$+F+$\lambda$   & \cite{Kukdiss}  & 8 & 14 & 7 &  3 & 8 & 16 & 18 \\
\hline
$E$ in Example \ref{ex:E}: \,\,
$\lambda$+E+$\lambda$   & \cite{Kukdiss}  & 8 & 14 & 5 & 10 & 6 & 52 & 60 \\
\hline
$\! G$ in Example \ref{ex:G}: 
$\lambda$f+E+f$\lambda$ & \cite{JKSA}     & 8 & 14 & 6 & 15 & 7 & 68 & 90 \\
\hline
$\! G'$ in Example \ref{ex:Gprime}: 
$\lambda$+E+f$\lambda$ &    & 8 & 14 & 6 & 9 & 7 & 42 & 54 \\
\hline 
\hline
$v = \utab{c}{0,0,1,0,0,1,0,0,1}$ &  Ref  & $n$ & $N$ & $\!{\rm dim}(X)\!$ & $\!{\rm deg}(X) \!$ & 
$\! {\rm dim}(X_{[v]})\! $ & $\! {\rm deg}(X_{[v]})\! $ &  Prop~\ref{prop:degdis} \\
\hline 
$F$ in Example \ref{ex:more}:  \quad
F+$\lambda$   & \cite{KSKA} & 8 & 11 & 7 &  3 & 8 & 8 & 9 \\
\hline
$E$ in Example \ref{ex:more}:  \quad
E+$\lambda$   & \cite{KSKA} & 8 & 11 & 5 & 10 & 6 & 26 & 30 \\
\hline
$G$ in Example \ref{ex:more}:  
f+E+f$\lambda$ &  & 8 & 11 & 6 & 15 & 7 & 37 & 45 \\
\hline
$G'$ in Example \ref{ex:more}:  
E+f$\lambda$ & \cite{KSKA} & 8 & 11 & 6 & 9 & 7 & 19 & 27 \\
\hline
$G''$ in Example \ref{ex:more}:  
f+E+$\lambda$ & & 8 & 11 & 6 & 9 & 7 & 23 & 27 \\
\hline
\end{tabular}
\caption{Dimensions and degrees of two-view models and their radial distortions.}
\label{tab:comparison}
\end{table}

\begin{example} \label{ex:more} \rm
We revisit the four two-view models discussed above,  but with 
distortion vector $v = (0,0,1,0,0,1,0,0,1)$.  Now, $N=11$ and
only one camera is distorted.
The rational normal scroll $\mathcal{S}_{v}$ has codimension $2$ and degree $3$ in
 $\PP^{11}$. Its parametric representation~is
$$
\bigl(\, x_{11} : x_{12} : x_{13}: x_{13}\lambda : 
x_{21} : x_{22} : x_{23} : x_{23}\lambda : 
 x_{31} : x_{32} : x_{33}: x_{33}\lambda  \,\bigr).
$$
The distortion varieties $F_{[v]}$,
$E_{[v]}$, $G_{[v]}$ and $G'_{[v]}$  live in $\PP^{11}$.
Their degrees are shown in the lower half of Table~\ref{tab:comparison}.
For instance, consider the last two rows.
The notation E+f$\lambda$ means that the right
camera has unknown focal length and it is also distorted.

The fifth row refers to another variety  $G''$. 
This is the image of $G'$ under the linear isomorphism that
maps a $3 \times 3$-matrix to its transpose.
 Since $v$ is not a symmetric matrix, unlike $u$, the
variety $G''_{[v]}$ is actually different from $G'_{[v]}$. The descriptor \ f+E+$\lambda$ of $G''_{[v]}$
expresses that the left camera has unknown focal length and the right camera is distorted.
The variety $G''_{[v]}$  has dimension $7$  and degree $23$ in $\PP^{11}$.
In addition to the 
three quadrics $x_{3i} y_{3j} - x_{3j} y_{3i}$
that define $\mathcal{S}_v$, the ideal generators for $G''_{[v]}$ are
two cubics and five quartics. The minimal problem \cite{KSKA, Kukdiss} for this distortion variety is
studied in detail in Section~\ref{sec5}.
 \hfill $\diamondsuit$
\end{example}

\section{Equations and degrees}
\label{sec3}

In this section we express the degree and equations of $X_{[u]}$ in terms of
those of $X$.  Throughout we assume that $X$ is an irreducible variety
of codimension $c $ in $\PP^n$ and the
distortion vector $u \in \NN^{n+1}$ satisfies
$u_0 \leq u_1 \leq \cdots \leq u_n$.
We begin with a general upper bound for the degree.

\begin{proposition} \label{prop:degdis}
Suppose $u_n \geq 1$. 
The degree of the distortion variety satisfies
\begin{equation} \label{eq:upperbound}
{\rm deg}(X_{[u]}) \,\,\leq \, \,
{\rm deg}(X) \cdot (u_{c} + u_{c+1} + \cdots + u_n). 
\end{equation}
This holds with equality if the coordinates are chosen so that $X$ is in general position in $\PP^n$.
\end{proposition}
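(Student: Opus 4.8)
The plan is to bound the degree by intersecting $X_{[u]}$ with a generic linear subspace and translating this back to an intersection problem on $X$ via the scroll fibration. Recall that $X_{[u]}$ has dimension $d+1 = n-c+1$ in $\PP^N$, so its degree equals the number of points in $X_{[u]} \cap H_1 \cap \cdots \cap H_{d+1}$ for generic hyperplanes $H_i$ in $\PP^N$. Each hyperplane $H_i$ pulls back under the parametrization \eqref{eq:upara} to a condition of the form $\sum_{k} p_k(\lambda)\, x_k = 0$, where $p_k$ is a polynomial in $\lambda$ of degree $\le u_k$; over a point $(x_0:\cdots:x_n)\in X$, this is a single polynomial equation in $\lambda$. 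So a point of $X_{[u]}$ lying on all $d+1$ hyperplanes corresponds to a point $x\in X$ together with a value of $\lambda$ that is a common root of $d+1$ such $\lambda$-polynomials.

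First I would reduce to counting: for generic $H_i$, the locus in $X$ where the first $d$ pulled-back equations have a common root $\lambda$ is (generically) a curve $C\subset X$ — this is the preimage in $X$ of the generic codimension-$d$ linear section of $X_{[u]}$ projected to $\PP^n$ — and then the last hyperplane $H_{d+1}$ cuts out finitely many $(x,\lambda)$ pairs on the corresponding curve in $X_{[u]}$. The count of these is controlled by the degree of the curve $X_{[u]}\cap H_1\cap\cdots\cap H_d$ inside $\PP^N$, equivalently by how the $\lambda$-resultant behaves. The cleanest route: treat the $d$ equations as defining, fiberwise over $x$, a $0$-dimensional scheme in the $\lambda$-line $\PP^1$ (the scroll fiber), and use that a generic such section of the scroll meets each fiber $\PP^1$ in a bounded number of points. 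Since $u_n$ is the largest exponent and a generic point of $X$ has all coordinates nonzero, the top $\lambda$-degree appearing in the pulled-back equation is exactly $u_n$; after using $d$ generic equations to eliminate, the residual degree in $\lambda$ contributing over $X$ is governed by $u_c + u_{c+1} + \cdots + u_n$, because eliminating against $X$ (codimension $c$) effectively removes the $c$ smallest exponents $u_0,\ldots,u_{c-1}$.

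More concretely, here is the counting argument I would write out. Projectivize the $\lambda$-line: a point of $X_{[u]}$ is $(x,[\lambda_0:\lambda_1])$ with $x\in X$, and a generic hyperplane pulls back to a bihomogeneous form of bidegree $(1,u_n)$ in $(x,\lambda)$ after clearing denominators — more precisely of multidegree given by the $u_i$'s. The degree of $X_{[u]}$ is then computed as an intersection number on the $\PP^1$-bundle (the scroll restricted over $X$, or rather its normalization), and expands via the projection formula into $\deg(X)$ times a sum of the $u_i$ with a correction coming from the base locus where some $x_i = 0$. When $X$ is in general position, $X$ avoids all the coordinate subspaces cut out by $x_0 = \cdots = x_{c-1} = 0$ that would reduce the $\lambda$-degree, so no correction applies to the top $n-c+1$ terms and we get exactly $\deg(X)\cdot(u_c + \cdots + u_n)$; for special $X$ the degree can only drop, giving the inequality \eqref{eq:upperbound}. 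For the equality statement I would invoke genericity to ensure (i) $X$ meets the relevant coordinate flag properly so that the pulled-back equations retain full $\lambda$-degree on $X\cap H_1\cap\cdots$, and (ii) the resulting $\lambda$-polynomials have no spurious common factors, so that all intersection points are reduced and counted exactly once.

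The main obstacle I anticipate is making precise why it is the \emph{smallest} $c$ exponents $u_0,\ldots,u_{c-1}$ that get discarded rather than some other set — this is where the hypothesis $u_0 \le u_1 \le \cdots \le u_n$ and the codimension $c$ of $X$ interact. The heuristic is that imposing $c$ generic linear conditions (coming from the codimension of $X$, or equivalently choosing a generic coordinate flag) lets one kill the $c$ coordinates whose $\lambda$-blocks are shortest, thereby removing $u_0 + \cdots + u_{c-1}$ from the total $|u| = u_0 + \cdots + u_n$; but turning this into a rigorous elimination-theoretic or intersection-theoretic statement — ideally by a careful degeneration to a generic hyperplane arrangement, or by the Chow-polytope computation promised in Theorem~\ref{thm:degdis} — is the technical heart. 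I expect the full proof to either specialize the general Chow-polytope formula of Theorem~\ref{thm:degdis}, or to run the above fiberwise intersection count directly with the genericity of $X$'s position supplying all the transversality needed.
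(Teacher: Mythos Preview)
Your proposal has the right starting point --- pulling back generic hyperplanes through the parametrization \eqref{eq:upara} --- but then you turn in the wrong direction, and the gap you flag at the end (``why it is the \emph{smallest} $c$ exponents that get discarded'') is real and unfilled. You try to fix $x\in X$ and count common $\lambda$-roots of the pulled-back equations; this forces you into a delicate analysis of when $d+1$ univariate polynomials of varying degrees share a root, and the $\PP^1$-bundle picture you invoke is not quite right (the map $\mathcal{S}_u\dashrightarrow\PP^n$ is only a rational fibration with nontrivial base locus, not a bundle), so the projection-formula computation you sketch would need substantial repair.

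The paper resolves your obstacle by reversing the roles of $x$ and $\lambda$. Write the $d+1=n-c+1$ hyperplanes as the rows of an $(n-c+1)\times(N+1)$ matrix, and substitute the parametrization: each row becomes a linear form in $x_0,\ldots,x_n$ whose coefficients are polynomials in $\lambda$. For a \emph{fixed} $\lambda\in\CC$, the kernel of this $(n-c+1)\times(n+1)$ matrix is a $\PP^{c-1}\subset\PP^n$, so as $\lambda$ varies you get a rational curve in the Grassmannian $\textup{Gr}(\PP^{c-1},\PP^n)$. The Pl\"ucker coordinates of this curve are the maximal minors of the matrix, and the minor on columns $i_0<\cdots<i_{n-c}$ has $\lambda$-degree $u_{i_0}+\cdots+u_{i_{n-c}}$; the maximum over all choices is $u_c+u_{c+1}+\cdots+u_n$, attained by the last $n-c+1$ columns. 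That is precisely where the ordering $u_0\le\cdots\le u_n$ enters, and it answers your question about which $c$ exponents are dropped. Now a pair $(x,\lambda)$ contributes to the count iff $x\in X$ lies on the $\PP^{c-1}$ indexed by $\lambda$, i.e.\ iff this curve meets the Chow hypersurface $V(\mathrm{Ch}_X)\subset\textup{Gr}(\PP^{c-1},\PP^n)$, which has degree $\deg(X)$. B\'ezout on the Grassmannian gives the bound $\deg(X)\cdot(u_c+\cdots+u_n)$. Equality for $X$ in general position follows because the monomial $p_{c,c+1,\ldots,n}^{\deg(X)}$ then appears in $\mathrm{Ch}_X$, so the substituted polynomial in $\lambda$ has full degree.
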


The upper bound in Proposition~\ref{prop:degdis} is shown for our models in the last column
of Table~\ref{tab:comparison}. This result  will be strengthened in Theorem \ref{thm:degdis} below,
where we give an exact degree formula that
works for all $X$. It is instructive to begin with the two extreme cases.
If $c = 0$ and $X = \PP^n$ then we recover the fact that the scroll
$X_{[u]} = \mathcal{S}_u$ has degree $N-n = u_0 + \cdots + u_n$. If
$c = n$ and $X$ is a general point in $\PP^n$ then
$X_{[u]}$ is a rational normal curve of degree $u_n$.

The following proof, and the subsequent development in this section, assumes
familiarity with two tools from computational algebraic geometry:
the construction of {\em initial ideals} with respect to weight vectors, as in \cite{GBCP},
and the {\em Chow form} of a projective variety  \cite{DS, FKO, GKZ, KSZ}.

\begin{proof}[Proof of Proposition~\ref{prop:degdis}]
Fix $\textup{dim}(X_{[u]}) = n-c+1$ general linear forms on $\PP^{N}$, denoted $\ell_{0}, \ell_1, \ldots, \ell_{n-c}$.
We write their coefficients as the rows
of the  $(n-c+1) \times (N+1)$ matrix
\begin{equation}
\label{eq:genericmatrix}
\begin{bmatrix}
\alpha_{0,0} \, & \, \alpha_{0,1} \, & \, \alpha_{0,2} \,& \, \cdots \, & \, \alpha_{0, N} \\
\alpha_{1,0} \, & \, \alpha_{1,1} \, & \, \alpha_{1,2} \,& \, \cdots \, & \, \alpha_{1, N} \\
\vdots \, & \, \vdots \, & \, \vdots \, & \ddots &  \, \vdots \\
\alpha_{n-c,0} \, & \, \alpha_{n-c,1} \, & \, \alpha_{n-c,2} \,& \, \cdots \, & \, \alpha_{n-c, N}
\end{bmatrix}.
\end{equation}
Here $\alpha_{i,j} \in \CC$.  The degree of $X_{[u]}$ equals $\#\big{(}X_{[u]} \cap V(\ell_{0}, \ldots, \ell_{n-c})\big{)}$.
We shall do this count. Recall that $X_{[u]}$ is the closure of the image of 
the injective map $ X \times \CC  \rightarrow  \PP^{N} $ given~in~(\ref{eq:upara}).
The image of this map is dense in $X_{[u]}$. Its complement is the 
$\PP^n$ consisting  of all points whose
coordinates in each the $n+1$ groups are zero except for the last one.
Since the linear forms $\ell_{i}$ are generic, all points of $X_{[u]} \cap V(\ell_{0}, \ldots, \ell_{n-c})$ lie in this image.
By injectivity of the map, ${\rm deg}(X_{[u]})$ is the 
number of pairs $(x, \lambda) \in X \times \CC$ which map into $X_{[u]} \cap V(\ell_{0}, \ldots, \ell_{n-c})$.

We formulate this condition on $(x, \lambda)$ as follows.  Consider the $(n-c+1) \times (n+1)$ matrix
\begin{equation}
\label{eq:matrixwithlambda}
\!\!\!\!
\begin{bmatrix}
\alpha_{0,0} + \alpha_{0,1} \lambda + \cdots + \alpha_{0,u_{0}}\lambda^{u_0} & \cdots & \cdots & \alpha_{0,u_{0}+\ldots+u_{n-1}+1} +  \cdots + \alpha_{0,N-n}\lambda^{u_{n}} \\
\alpha_{1,0} + \alpha_{1,1} \lambda + \cdots + \alpha_{1,u_{0}}\lambda^{u_0} & \cdots & \cdots & \alpha_{1,u_{0}+\ldots+u_{n-1}+1} +  \ldots + \alpha_{1,N-n}\lambda^{u_{n}} \\
\vdots & \ddots & \ddots & \vdots \\
\alpha_{n-c,0} + \alpha_{n-c,1} \lambda+ \cdots + \alpha_{n-c,u_{0}}\lambda^{u_0} \!\!& \cdots & \cdots 
&\! \! \alpha_{n-c,u_{0}+\ldots+u_{n-1}+1} +  \cdots + \alpha_{n-c,N-n}\lambda^{u_{n}} 
\end{bmatrix}\!.
\end{equation}
We want to count pairs $(x, \lambda) \in \PP^{n} \times \CC$ such that $x \in X$ and $x$ lies in the kernel of this matrix. By genericity of $\ell_{i}$, this matrix has rank $n-c+1$ for all $\lambda \in \CC$.
So for each $\lambda \in \CC$, the kernel
of the matrix  (\ref{eq:matrixwithlambda}) is a linear subspace of dimension $c-1$ in $ \PP^{n}$.  

We conclude that (\ref{eq:matrixwithlambda}) defines a rational curve
in the Grassmannian $\textup{Gr}(\PP^{c-1}, \PP^{n})$. Here the $\alpha_{i,j}$
are fixed generic complex numbers and $\lambda$ is an unknown that parametrizes the curve.
If we take the Grassmannian in its Pl\"ucker embedding then
the degree of our curve is $u_{c} + u_{c+1} + \cdots + u_n$,
which is the largest degree in $\lambda$ of any maximal minor of (\ref{eq:matrixwithlambda}).

At this point we use  the \textit{Chow form} ${\rm Ch}_X$ of the variety $X$.  
Following \cite{DS, GKZ}, this is the defining equation of
an irreducible hypersurface in the Grassmannian $\textup{Gr}(\PP^{c-1}, \PP^{n})$.
Its points are the subspaces that intersect $X$.  The degree of ${\rm Ch}_X$ in Pl\"ucker coordinates
is~$\textup{deg}(X)$. 

We now consider the intersection of our curve with the hypersurface  defined by ${\rm Ch}_X$.
Equivalently, we substitute the maximal minors of (\ref{eq:matrixwithlambda}) into ${\rm Ch}_X$
and we examine the resulting polynomial in $\lambda$. 
Since the matrix entries $\alpha_{i,j}$ in (\ref{eq:genericmatrix}) are generic, 
the curve intersects  the hypersurface of the Chow form ${\rm Ch}_X$
outside its singular locus.
  By B\'ezout's Theorem, 
the number of intersection points 
is bounded above by ${\rm deg}(X) \cdot(u_{c} + u_{c+1} + \cdots + u_n)$.

Each intersection point is non-singular on $V({\rm Ch}_X)$, and so 
the corresponding linear space  intersects the variety $X$
in a unique point $x$. We conclude that the number of desired pairs $(x,\lambda)$
is at most ${\rm deg}(X) \cdot(u_{c} + u_{c+1} + \cdots + u_n)$.
This establishes the upper bound.

For the second assertion, we apply a  general linear change of coordinates to $X$ in $\PP^n$.
Consider the lexicographically last Pl\"ucker coordinate, denoted $p_{c,c+1,\ldots,n}$. 
The monomial $p_{c,c+1,\ldots,n}^{{\rm deg}(X)}$ appears with non-zero
coefficient in  the Chow form ${\rm Ch}_X$. Substituting the maximal minors of (\ref{eq:matrixwithlambda})
into ${\rm Ch}_X$, we obtain a polynomial in $\lambda$ of degree 
${\rm deg}(X) \cdot (u_{c} + u_{c+1} + \cdots + u_n)$. By the genericity hypothesis on
(\ref{eq:genericmatrix}), this polynomial has distinct roots in $\mathbb{C}$.
These represent distinct points in  $X_{[u]} \cap V(\ell_{0}, \ldots, \ell_{n-c})$,
and we conclude that the upper bound is attained.
 \end{proof}

We will now refine the method in the proof above to derive
an exact formula for the degree of $X_{[u]}$ that works in all cases.
The Chow form ${\rm Ch}_X$ is expressed in
primal Pl\"ucker coordinates $p_{i_0,i_1,\ldots,i_{n-c}}$
on $\textup{Gr}(\PP^{c-1}, \PP^{n})$.
The {\em weight} of such a coordinate is the vector
$e_{i_0} + e_{i_1} + \cdots + e_{i_{n-c}}$, and the weight
of a monomial is the sum of the weights of its variables.
The {\em Chow polytope} of $X$ is the convex hull of 
the weights of all Pl\"ucker monomials appearing in ${\rm Ch}_X$; see \cite{KSZ}.

\begin{theorem}
\label{thm:degdis}
The degree of $X_{[u]}$ is the maximum value attained by the linear functional $w \mapsto u \cdot w $ 
on the Chow polytope of $X$. This positive integer can be computed by the formula
\begin{equation}
\label{eq:monosat}
 {\rm degree}(X_{[u]}) \quad = \quad \sum_{j=0}^n\, u_j \cdot {\rm degree}
\bigl(\, {\rm in}_{-u}(X) : \langle x_j \rangle^\infty \,\bigr) ,
\end{equation}
where ${\rm in}_{-u}(X)$ is the initial monomial ideal of  $X$ with respect to a term order 
that refines~$-u$.
\end{theorem}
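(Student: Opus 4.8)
I would build directly on the proof of Proposition~\ref{prop:degdis}, where the key object was already constructed: substituting the maximal minors of the matrix~(\ref{eq:matrixwithlambda}) into the Chow form ${\rm Ch}_X$ produces a univariate polynomial in $\lambda$ whose number of roots (counted with multiplicity, when the $\alpha_{i,j}$ are generic complex numbers) equals ${\rm deg}(X_{[u]})$. In that proof we only used the crude $\lambda$-degree bound coming from the largest-degree maximal minor. Now the refinement is to compute the \emph{exact} $\lambda$-degree of this polynomial. The first step is to track $\lambda$-degrees carefully: the $(i_0,\ldots,i_{n-c})$-entry pattern of~(\ref{eq:matrixwithlambda}) shows that the maximal minor $p_{i_0,\ldots,i_{n-c}}$, after substitution, is a polynomial in $\lambda$ whose degree in $\lambda$ is exactly the weight $w = e_{i_0}+\cdots+e_{i_{n-c}}$ paired with $u$, i.e.\ $u\cdot w$ (and its leading $\lambda$-coefficient is generic, hence nonzero). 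Therefore substituting into a Pl\"ucker monomial $\prod p_{(i)}$ yields a $\lambda$-polynomial of degree $u\cdot w$ where $w$ is the weight of that monomial, again with generic leading coefficient.

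**Degree via the Chow polytope.**
The second step is the tropical/Newton-polytope argument: ${\rm Ch}_X = \sum_{\text{monomials}} c_\mu\,p^\mu$, and after substitution each term contributes a $\lambda$-polynomial whose $\lambda$-degree is $u\cdot({\rm weight}(p^\mu))$, with a nonzero leading coefficient that is a generic polynomial in the $\alpha_{i,j}$. For generic $\alpha_{i,j}$ there is no cancellation among the leading terms of maximal $\lambda$-degree (the relevant polynomial identities in the $\alpha$'s are not identically zero, exactly as in the second half of the proof of Proposition~\ref{prop:degdis} where one exhibits the monomial $p_{c,c+1,\ldots,n}^{{\rm deg}(X)}$). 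Hence the $\lambda$-degree of the substituted Chow form equals $\max_\mu\, u\cdot({\rm weight}(p^\mu))$, which by definition of the Chow polytope ${\rm Ch}(X)$ is exactly $\max_{w\in{\rm Ch}(X)} u\cdot w$. Combining with the first paragraph, ${\rm deg}(X_{[u]}) = \max_{w\in{\rm Ch}(X)} u\cdot w$, proving the first assertion of the theorem.

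**The explicit formula.**
For the second assertion, I would invoke the Kapranov--Sturmfels--Zelevinsky theory relating the Chow polytope to Gr\"obner degenerations: for a weight vector $-u$, the face of ${\rm Ch}(X)$ maximizing $w\mapsto u\cdot w$ is the Chow polytope of the initial scheme ${\rm in}_{-u}(X)$ (via a term order refining $-u$), which is a union of coordinate subspaces with multiplicities. Concretely, $\max_{w\in{\rm Ch}(X)} u\cdot w$ equals $u\cdot w_0$ for any vertex $w_0$ of that face, and $w_0$ is the weight of the Chow form of ${\rm in}_{-u}(X)$, i.e.\ $\sum_j (\deg {\rm in}_{-u}(X):\langle x_j\rangle^\infty)\,e_j$ — this is precisely the statement in \cite{KSZ} that the Chow polytope of a monomial ideal is the sum of the coordinate-subspace contributions weighted by their multiplicities $\deg({\rm in}_{-u}(X):\langle x_j\rangle^\infty)$. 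Pairing with $u$ gives formula~(\ref{eq:monosat}). I would spell out that the saturation $(I:\langle x_j\rangle^\infty)$ extracts the primary component whose radical is the coordinate hyperplane $V(x_j)$'s complement — more precisely the part supported off $x_j=0$ — and that its degree is the multiplicity of the linear space $\{x_j=\cdots\}$ appearing in the cycle of ${\rm in}_{-u}(X)$; positivity of the total is immediate since ${\rm in}_{-u}(X)$ has the same degree as $X\ge 1$ and $u_n\ge 1$.

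**Main obstacle.**
The delicate point is the genericity/no-cancellation claim at the top $\lambda$-degree: one must argue that the coefficient of $\lambda^{\max u\cdot w}$ in the substituted Chow form, viewed as a polynomial in the $\alpha_{i,j}$, is not identically zero. The cleanest route is the one already used for the lower bound in Proposition~\ref{prop:degdis}: fix a generic linear change of coordinates on $X$ so that ${\rm Ch}_X$ contains the ``lex-largest'' Pl\"ucker monomial, then choose the $\alpha_{i,j}$ so that the corresponding minor's leading $\lambda$-coefficient survives; the tropical identity $\max_\mu u\cdot w_\mu$ is insensitive to the coordinate change because it only depends on the Chow polytope's support, which is fixed up to the translation absorbed by the normalization. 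The second mild obstacle is bookkeeping the identity ${\rm Chow\,polytope\,of\,a\,monomial\,ideal} = \sum_j \deg(I:\langle x_j\rangle^\infty)\,\Delta_j$; I would cite \cite{KSZ} for this rather than reprove it, and simply verify the pairing with $u$ reproduces~(\ref{eq:monosat}).
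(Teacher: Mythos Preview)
Your overall architecture matches the paper's proof: substitute the maximal minors of (\ref{eq:matrixwithlambda}) into ${\rm Ch}_X$, identify the $\lambda$-degree of the result with $\max\{u\cdot w : w\in\text{Chow polytope of }X\}$, and then invoke \cite{KSZ} to read that maximum off from the Chow point of ${\rm in}_{-u}(X)$, whose $j$-th coordinate is $\deg\bigl({\rm in}_{-u}(X):\langle x_j\rangle^\infty\bigr)$.

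There is, however, a genuine gap in your resolution of the ``no cancellation'' obstacle. You propose to transplant the argument from the equality case of Proposition~\ref{prop:degdis}: apply a generic linear change of coordinates to $X$ and then assert that ``$\max_\mu u\cdot w_\mu$ is insensitive to the coordinate change because it only depends on the Chow polytope's support, which is fixed up to the translation absorbed by the normalization.'' That assertion is false. The Chow polytope depends heavily on the coordinates on $\PP^n$: a coordinate hyperplane has a one-point Chow polytope, whereas after a generic coordinate change its Chow polytope is the full standard simplex. Theorem~\ref{thm:degdis} is a statement about a \emph{specific} $X$ in \emph{specific} coordinates, and it is precisely this coordinate dependence that makes it sharper than Proposition~\ref{prop:degdis}. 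So you may not change coordinates here, and the appeal to the lex-last monomial $p_{c,\ldots,n}^{\deg X}$ is unavailable.

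The correct argument, which the paper compresses into ``the highest degree terms do not cancel,'' goes as follows. The leading $\lambda$-coefficients in the $n{+}1$ columns of (\ref{eq:matrixwithlambda}) involve pairwise disjoint sets of the $\alpha_{i,j}$, so the $(n{-}c{+}1)\times(n{+}1)$ matrix formed by those leading coefficients is itself generic. The top $\lambda$-coefficient of the substituted Chow form is therefore ${\rm in}_u({\rm Ch}_X)$ evaluated at the Pl\"ucker coordinates of a generic point of $\textup{Gr}(\PP^{c-1},\PP^n)$. By the very result from \cite{KSZ} you already invoke in your third step, ${\rm in}_u({\rm Ch}_X)$ is, modulo the Pl\"ucker ideal, the Chow form of the monomial scheme ${\rm in}_{-u}(X)$, a nonzero Pl\"ucker monomial; hence its value at a generic Grassmannian point is nonzero. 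This closes the gap without any coordinate change.
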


\begin{proof}
Let $M$ be a monomial ideal in $x_0,x_1,\ldots,x_n$ whose variety is pure
of codimension $c$. Each of its irreducible components is a coordinate subspace
${\rm span}(e_{i_0}, e_{i_1},\ldots, e_{i_{n-c}})$ of $\PP^n$. We write
$\mu_{i_0,i_1,\ldots,i_{n-c}}$ for the multiplicity of $M$
along that coordinate subspace. By \cite[Theorem 2.6]{KSZ}, the Chow form of (the cycle given by) $M$
is the Pl\"ucker monomial $\prod p_{i_0,i_1,\ldots,i_{n-c}}^{\mu_{i_0,i_1,\ldots,i_{n-c}}}$,
and the Chow polytope of $M$ is the point
$\, \sum \mu_{i_0,i_1,\ldots,i_{n-c}} (e_{i_0} + e_{i_1} + \cdots + e_{i_{n-c}})$.
The $j$-th coordinate of that point can be computed from $M$ without
performing a monomial primary decomposition.
Namely, the $j$-th coordinate of the Chow point of $M$  is the degree
of the saturation $M: \langle x_j \rangle^\infty$. This follows from
\cite[Proposition 3.2]{KSZ} and
the proof of \cite[Theorem~3.3]{KSZ}.

We now substitute each maximal minor of the matrix (\ref{eq:matrixwithlambda})
for the corresponding Pl\"ucker coordinate $ p_{i_0,i_1,\ldots,i_{n-c}}$. This results in
a general polynomial of degree $u_{i_0} + u_{i_1} + \cdots + u_{i_{n-c}}$ in the one
unknown $\lambda$. When carrying out this substitution in the Chow form
${\rm Ch}_X$, the highest degree terms do not cancel, and we obtain
a polynomial in $\lambda$  whose degree  is the largest $u$-weight
among all Pl\"ucker monomials in ${\rm Ch}_X$. Equivalently,
this degree in $\lambda$ is the maximum inner product of the vector $u$ with any vertex of 
the Chow polytope of $X$.

One vertex that attains this maximum is the Chow point of the monomial ideal $M = {\rm in}_{-u}(X)$
in the proof of Proposition~\ref{prop:degdis}.  Note that we had chosen one particular term order
to refine the partial order given by $-u$. If we vary that term order then we obtain
all vertices on the face of the Chow polytope supported by $u$.
The saturation formula for the Chow point of
the monomial ideal $M$ in the first paragraph of the proof completes our argument.
\end{proof}

We are now able to characterize when the upper bound in Proposition \ref{prop:degdis} is attained.
Let $c_-$ and $c_+$ be the smallest and largest index 
respectively such that $u_{c_-} = u_{c} = u_{c_+}$.
We define a set $\mathcal{L}_u$ of $n-c+1$ linear forms as follows.
Start with the $n-c_+$ variables
$x_{c_++1}$, $x_{c_++2}$, $\ldots$, $x_n$
and then take $c_+-c+1$ generic linear forms in the variables
$x_{c_-}, x_{c_-+1}, \ldots, x_{c_+}$.
In the case when $u$ has distinct coordinates, $V(\mathcal{L}_u)$
is simply the subspace spanned by
$e_0,e_1,\ldots,e_{n-c}$.

\begin{corollary}
The degree of $X_{[u]}$ is the right hand side of (\ref{eq:upperbound})
if and only if $\,V(\mathcal{L}_u) \cap X = \emptyset$.
\end{corollary}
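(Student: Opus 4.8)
The plan is to connect the degree formula in Theorem~\ref{thm:degdis} with the geometry of the linear space $V(\mathcal{L}_u)$. Recall from the proof of Proposition~\ref{prop:degdis} that $\mathrm{deg}(X_{[u]})$ counts pairs $(x,\lambda) \in X \times \CC$ whose image under (\ref{eq:upara}) lies on $\dim(X_{[u]})$ generic hyperplanes, and this count was realized by intersecting a rational curve in $\mathrm{Gr}(\PP^{c-1},\PP^n)$ with the Chow hypersurface $V(\mathrm{Ch}_X)$. The curve is parametrized by $\lambda \mapsto \ker$ of the matrix (\ref{eq:matrixwithlambda}). The upper bound $\mathrm{deg}(X)\cdot(u_c+\cdots+u_n)$ is the B\'ezout number, namely $\mathrm{deg}(\mathrm{Ch}_X)$ times the degree of the curve in Pl\"ucker coordinates. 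So the bound is attained exactly when (i) the curve meets $V(\mathrm{Ch}_X)$ transversally at smooth points --- which genericity of the $\alpha_{i,j}$ already guarantees --- and (ii) \emph{no intersection points are lost at $\lambda = \infty$}, i.e.\ the leading-coefficient polynomial of the substituted Chow form does not vanish identically and has no multiplicity issues at infinity.

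First I would make precise what "loss at infinity" means. Substituting the maximal minors of (\ref{eq:matrixwithlambda}) into $\mathrm{Ch}_X$ produces a polynomial $P(\lambda)$ whose degree, by the argument in the proof of Theorem~\ref{thm:degdis}, equals $\max_{w} u\cdot w$ over the Chow polytope; the upper bound (\ref{eq:upperbound}) equals $u_c+\cdots+u_n$ times $\mathrm{deg}(X)$, which is the value of $u\cdot w$ at the particular weight $\mathrm{deg}(X)\cdot(e_c+\cdots+e_n)$. That weight is the Chow point of the monomial ideal $\langle x_0,x_1,\ldots,x_{c-1}\rangle^{\mathrm{deg}(X)}$ attached with the right multiplicity, which arises as $\mathrm{in}_{-u}(X)$ precisely when $X$ is in general position (this is exactly the content of the second sentence of Proposition~\ref{prop:degdis}). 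So equality in (\ref{eq:upperbound}) holds if and only if $\deg P = u_c+\cdots+u_n$ times $\mathrm{deg}(X)$, equivalently if and only if the leading Pl\"ucker monomial $p_{c,c+1,\ldots,n}^{\mathrm{deg}(X)}$ of $\mathrm{Ch}_X$ contributes a nonzero term after substitution. I would then identify the leading term of each minor $p_{i_0,\ldots,i_{n-c}}(\lambda)$ in $\lambda$: its $\lambda^{u_{i_0}+\cdots+u_{i_{n-c}}}$-coefficient is (up to a nonzero generic scalar) the maximal minor of the \emph{constant} matrix formed by the highest-$\lambda$-power columns. Carefully: the $k$-th block of columns in (\ref{eq:matrixwithlambda}) has top coefficient in the single column coming from $x_k\lambda^{u_k}$, but blocks with equal $u_k$ must be handled together. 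This is where the indices $c_-, c_+$ enter.

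The key step is then a rank/emptiness translation. The polynomial $P(\lambda)$ has degree equal to the upper bound iff the coefficient of $\lambda^{(u_c+\cdots+u_n)\mathrm{deg}(X)}$ in $P$ is nonzero iff the substitution of the "leading matrix" into $\mathrm{Ch}_X$ --- which by homogeneity means evaluating $\mathrm{Ch}_X$ at the point of $\mathrm{Gr}(\PP^{c-1},\PP^n)$ given by the kernel of that leading matrix --- is nonzero, i.e.\ that kernel subspace does \emph{not} lie on the Chow hypersurface, i.e.\ it does not meet $X$. I would show this leading matrix's kernel is exactly (the projectivization of) $V(\mathcal{L}_u)$: among the $N-n$ monomial columns, the ones of maximal $\lambda$-degree in each group are $x_j\lambda^{u_j}$, but only the groups with the $n-c+1$ largest $u_j$-values survive in a generic maximal minor; if all $u_j$ are distinct these are $j=c,c+1,\ldots,n$ giving kernel $\mathrm{span}(e_0,\ldots,e_{n-c})$, and with ties among the value $u_c$ we get instead $c_+-c+1$ generic linear combinations in the tied block $x_{c_-},\ldots,x_{c_+}$ plus the variables $x_{c_++1},\ldots,x_n$ --- precisely $\mathcal{L}_u$. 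Hence the leading coefficient of $P$ is, up to a nonzero scalar, $\mathrm{Ch}_X$ evaluated at $[V(\mathcal{L}_u)] \in \mathrm{Gr}$, which is nonzero iff $V(\mathcal{L}_u)\cap X = \emptyset$ by the defining property of the Chow form. Since the other direction (genericity forcing no loss anywhere else) was already handled in the proof of Proposition~\ref{prop:degdis}, this completes the equivalence.

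The main obstacle I anticipate is the bookkeeping with ties in the coordinates of $u$: when several $u_j$ equal $u_c$, one must argue that a \emph{generic} maximal minor of (\ref{eq:matrixwithlambda}) has its top $\lambda$-coefficient governed by a generic $(n-c+1)\times(c_+-c_-+1+\#\{j>c_+\})$ submatrix of the leading columns, and that the resulting kernel is a \emph{generic} linear section of the coordinate flat $\mathrm{span}(e_{c_-},\ldots,e_{c_+},\ldots) $ of the expected dimension rather than something degenerate. The cleanest way is to observe that genericity of the $\alpha_{i,j}$ makes this leading submatrix generic of full rank, so its kernel is a uniformly random $\PP^{c-1}$ inside the span of $e_0,\ldots,e_{c-1}$ shifted appropriately; I would phrase $V(\mathcal{L}_u)$ so that its defining forms are exactly the generic ones appearing here, making the identification tautological. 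A secondary subtlety is confirming that $\mathrm{Ch}_X$ does not vanish identically on the coordinate flat containing all these kernels unless $X$ actually meets $V(\mathcal{L}_u)$ --- but that is immediate from the Chow form detecting precisely the subspaces meeting $X$, applied to the single subspace $V(\mathcal{L}_u)$ (all the generic kernels sweep out a neighborhood of it, so nonvanishing at $V(\mathcal{L}_u)$ and at a generic nearby kernel are equivalent, $V(\mathrm{Ch}_X)$ being a hypersurface).
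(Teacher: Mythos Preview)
Your strategy and the paper's coincide at the core: both show that the bound in (\ref{eq:upperbound}) is attained iff ${\rm Ch}_X$ is nonzero at $V(\mathcal{L}_u)$, hence iff $V(\mathcal{L}_u)\cap X=\emptyset$. The paper reaches this more directly. Rather than passing through $P(\lambda)$ and a ``leading matrix,'' it simply writes down the primal Pl\"ucker coordinates of $V(\mathcal{L}_u)$ from its defining $(n{-}c{+}1)\times(n{+}1)$ matrix: the nonzero ones are exactly $p_{i_0,\ldots,i_{c_+-c},\,c_++1,\ldots,n}$ with $c_-\le i_0<\cdots<i_{c_+-c}\le c_+$, i.e.\ precisely the Pl\"ucker coordinates of maximal $u$-weight $u_c+\cdots+u_n$. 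Evaluating ${\rm Ch}_X$ at $V(\mathcal{L}_u)$ therefore kills every sub-maximal-weight monomial and keeps the rest, so the evaluation is nonzero exactly when ${\rm Ch}_X$ contains a monomial of that top weight; Theorem~\ref{thm:degdis} then finishes.

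There is one point in your route that needs repair. If your ``leading matrix'' is the matrix $A_\infty$ of top $\lambda$-coefficients of the columns of (\ref{eq:matrixwithlambda}), then $A_\infty$ is a fully generic $(n{-}c{+}1)\times(n{+}1)$ matrix and its kernel is a generic $\PP^{c-1}$ with \emph{all} Pl\"ucker coordinates nonzero---certainly not $V(\mathcal{L}_u)$. Moreover the leading coefficient of $P(\lambda)$ is \emph{not} ${\rm Ch}_X(\ker A_\infty)$: it is only the maximal-$u$-weight part of ${\rm Ch}_X$ evaluated at the minors of $A_\infty$, equivalently ${\rm Ch}_X$ evaluated at the \emph{limit} $L_\infty=\lim_{\lambda\to\infty}\ker M(\lambda)$ in the Grassmannian, where the non-maximal-weight Pl\"ucker coordinates have gone to zero. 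This $L_\infty$ is a generic member of the sub-Grassmannian $\{\,{\rm span}(e_0,\ldots,e_{c_--1})\subset L\subset{\rm span}(e_0,\ldots,e_{c_+})\,\}$, as is $V(\mathcal{L}_u)$. Your plan to choose the generic forms in $\mathcal{L}_u$ so that $V(\mathcal{L}_u)=L_\infty$ is legitimate, but to execute it you must first row-reduce $A_\infty$ against columns $c_+{+}1,\ldots,n$ to isolate a $(c_+{-}c{+}1)\times(c_+{-}c_-{+}1)$ block supplying those forms---you cannot read them off $A_\infty$ directly. The paper's Pl\"ucker-coordinate computation sidesteps this bookkeeping entirely.
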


\begin{proof}
The quantity $\, {\rm deg}(X) \cdot (u_{c} + u_{c+1} + \cdots + u_n)\,$
is the maximal $u$-weight among Pl\"ucker monomials of degree
equal to ${\rm deg}(X)$. The monomials that attain this maximal $u$-weight
are products of ${\rm deg}(X)$ many Pl\"ucker coordinates 
of weight $u_{c} + u_{c+1} + \cdots + u_n$. These 
are precisely the Pl\"ucker coordinates $\,p_{i_0,i_1\ldots,i_{c_+ - c},                                                                                                                                                                                                 \,u_{c_++1}, \ldots, u_n}$, where
$ c_- \leq i_0{<}i_1{<} \cdots {<} i_{c_+ - c} \leq c_+$.

Such monomials are non-zero when evaluated at the subspace $V(\mathcal{L}_u)$.
All other monomials, namely those having smaller $u$-weight, evaluate to zero on $V(\mathcal{L}_u)$.
Hence the Chow form ${\rm Ch}_X$ has terms of degree
$\, {\rm deg}(X) \cdot (u_{c} + u_{c+1} + \cdots + u_n)\,$
if and only if ${\rm Ch}_X$ evaluates to a non-zero constant on $V(\mathcal{L})$ 
if and only if the intersection of  $X$ with $V(\mathcal{L}_u)$ is empty.
\end{proof}

We present two example to illustrate the exact degree formula in Theorem \ref{thm:degdis}.

\begin{example} \rm
Suppose $X$ is a hypersurface in $\mathbb{P}^n$, defined by a 
homogeneous polynomial $\psi(x_0,\ldots,x_n)$ of degree $d$.
Let $\Psi$ be the {\em tropicalization} of $\psi$, with respect to min-plus algebra, as in
\cite{MS}.
Equivalently, $\Psi$ is the support function of the Newton polytope of $f$.
Then
\begin{equation}
\label{eq:tropformula}
 {\rm deg}(X_{[u]} ) \,\, = \,\, d \cdot |u| \, - \,\Psi(u_0,u_1,\ldots,u_n). 
 \end{equation}
For instance, let $n=8, d = 3$ and $\psi$ the determinant of a 
$3 \times 3$-matrix. Hence $X$ is the variety of {\em fundamental matrices}, as in
Example \ref{ex:intoP14}.
The tropicalization of the $3 \times 3$-determinant is
$$ \Psi =
{\rm min}\bigl(
u_{11} {+} u_{22} {+} u_{33},
u_{11} {+} u_{23} {+} u_{32},
u_{12} {+} u_{21} {+} u_{33},
u_{12} {+} u_{23} {+} u_{31},
u_{13} {+} u_{21} {+} u_{32},
u_{13} {+} u_{22} {+} u_{31}
\bigr).
$$
The degree of the distortion variety $X_{[u]}$ equals
 $3 \cdot \sum u_{ij} - \Psi$. This explains the degree $16$ we had observed
 in Example~\ref{ex:intoP14}   for the radial distortion of the fundamental matrices.
 \hfill $\diamondsuit $
 \end{example}

\begin{example} \rm
Let $X$ be the variety of essential matrices with the same distortion vector $u$.
In Example \ref{ex:E}, we found that  $\,{\rm deg}(X_{[u]}) = 52$.
The following {\tt Macaulay2} code verifies this:
\begin{verbatim}
U = {0,0,1,0,0,1,1,1,2};
R = QQ[x11,x12,x13,x21,x22,x23,x31,x32,x33,Weights=>apply(U,i->10-i)];
P = matrix {{x11,x12,x13},{x21,x22,x23},{x31,x32,x33}}
X = minors(1,2*P*transpose(P)*P-trace(P*transpose(P))*P)+ideal(det(P));
M = ideal leadTerm X;
sum apply( 9, i -> U_i * degree(saturate(M,ideal((gens R)_i))) )
\end{verbatim}
Here, ${\tt M}$ is the monomial ideal ${\rm in}_{-u}(X)$,
and the last line is our saturation formula in (\ref{eq:monosat}).
\hfill $\diamondsuit $ \end{example}

\medskip

We next derive the equations that define the distortion variety $X_{[u]}$
from those that define the underlying variety $X$.
Our point of departure is the ideal of the rational normal scroll
$\mathcal{S}_u$. It is generated by the
$\binom{N-n} {2}$ minors of the concatenated Hankel matrix.
The following lemma is well-known and easy to verify using
Buchberger's S-pair criterion; see also \cite{Pet}.

\begin{lemma} \label{lem:GB}
The $2 \times 2$-minors that define the rational normal scroll $\,\mathcal{S}_u$
form a Gr\"obner basis with respect to the diagonal monomial order.
The initial monomial ideal is squarefree. 
\end{lemma}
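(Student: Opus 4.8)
The plan is to verify the S-pair criterion directly for the generators of the ideal of $\mathcal{S}_u$, which are the $2\times 2$-minors of the concatenated Hankel matrix
$$
H \,=\, \begin{pmatrix}
h_{1,1} & h_{1,2} & \cdots & h_{1,|u|} \\
h_{2,1} & h_{2,2} & \cdots & h_{2,|u|}
\end{pmatrix},
$$
whose columns run through the Hankel blocks associated to each group of variables $x_i, x_i\lambda,\ldots,x_i\lambda^{u_i}$. Write $[k,\ell] = h_{1,k}h_{2,\ell} - h_{1,\ell}h_{2,k}$ for the minor on columns $k<\ell$, so that under the diagonal term order the leading term is $h_{1,k}h_{2,\ell}$. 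First I would record the combinatorial structure of $H$: each variable of $\PP^N$ except the very last coordinate $x_i\lambda^{u_i}$ of a block appears as $h_{1,k}$ for some $k$, and each variable except the very first coordinate $x_i$ of a block appears as $h_{2,\ell}$ for some $\ell$; moreover $h_{2,k} = h_{1,k+1}$ within a block (the Hankel overlap), which is the only coincidence among the entries. This bookkeeping is what makes the S-pair reductions work uniformly.

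Next I would enumerate the cases for an S-pair of two minors $[a,b]$ and $[c,d]$. If the leading monomials $h_{1,a}h_{2,b}$ and $h_{1,c}h_{2,d}$ are coprime, Buchberger's first criterion applies and the S-pair reduces to zero automatically, so we may assume they share a variable. The shared variable is either an ``$h_1$-entry'' (so $h_{1,a} = h_{1,c}$, forcing $a=c$, possibly via the Hankel identification of a column across the overlap), or an ``$h_2$-entry'' ($h_{2,b}=h_{2,d}$, forcing $b=d$), or a mixed coincidence where $h_{1,a}=h_{2,d}$ because of the Hankel overlap $h_{2,d}=h_{1,d+1}$ with $a=d+1$. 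In each case the S-polynomial is, up to sign, the classical Plücker-type syzygy among three $2\times 2$-minors: e.g.\ $h_{2,c}\cdot[a,d] - h_{2,a}\cdot[c,d] + h_{2,d}\cdot[a,c] = 0$ and its transpose. I would check that after clearing the shared leading variable the remaining expression is a combination of minors $[i,j]$ with smaller leading monomial, each times a monomial, so it reduces to zero modulo the generating set. The Hankel-overlap cases need the substitution $h_{2,k}=h_{1,k+1}$ to be tracked carefully, but they produce the same three-term relation with one entry rewritten, and the reduction still terminates.

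For the squarefreeness claim: the initial monomial ideal is generated by the leading terms $h_{1,k}h_{2,\ell}$ for $k<\ell$. Each such monomial is a product of two distinct entries of $H$, and since $h_{1,k}$ and $h_{2,\ell}$ come from the top and bottom rows respectively and the only repetitions among entries are within a single row (never across the two rows when $k<\ell$, because the overlap $h_{2,k}=h_{1,k+1}$ pairs a bottom entry in column $k$ with a top entry in column $k+1>k$), the two factors $h_{1,k}$ and $h_{2,\ell}$ are genuinely distinct variables of the polynomial ring. Hence every generator of $\mathrm{in}(\mathcal{S}_u)$ is squarefree of degree two, so the monomial ideal is squarefree.

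The main obstacle is the Hankel overlap: because consecutive columns of $H$ share a variable, a single variable of $\PP^N$ can appear in up to two columns, so the ``which pairs of minors have non-coprime leading terms'' analysis is not quite the clean picture one has for a generic $2\times|u|$ matrix, and one must be careful that the three-term syzygies still live in the ideal generated by the minors and that their leading terms are strictly smaller. I expect this to be routine once the indexing is set up, and it is precisely the content that \cite{Pet} and \cite[Theorem 19.9]{Har} handle in the classical case; the only new point is to confirm nothing degenerates for our particular concatenated-Hankel pattern, which follows from the observation above that the overlap only identifies entries in different columns and never collapses the two factors of a leading term.
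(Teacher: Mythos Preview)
Your proposal is correct and follows exactly the approach the paper indicates: the paper gives no proof at all, declaring the lemma ``well-known and easy to verify using Buchberger's S-pair criterion'' with a pointer to \cite{Pet}, and you have sketched precisely that verification. One small slip in your squarefreeness paragraph: the phrase ``the only repetitions among entries are within a single row'' is backwards---the Hankel overlap $h_{2,k}=h_{1,k+1}$ identifies entries \emph{across} the two rows, not within one---but your actual argument is fine, since $h_{1,k}=h_{2,\ell}$ would force $k=\ell+1>\ell$, contradicting $k<\ell$.
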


For instance, in Example~\ref{ex:123}, when $n=2$ and $u = (1,2,3)$,
the initial monomial ideal~is
\begin{equation}
\label{eq:123init}
 \langle a_0 b_1, a_0 b_2, a_0 c_1, a_0 c_2 ,a_0 c_3,
b_0 b_2, b_0 c_1, b_0 c_2, b_0 c_3,
b_1 c_1, b_1 c_2, b_1 c_3,
c_0 c_2, c_0 c_3, c_1 c_3 \rangle. 
\end{equation}
A monomial $m$ is {\em standard} if it does not lie in this initial ideal.
The {\em weight} of a monomial $m$ is the sum of its indices.
Equivalently, the weight of $m$ is the degree in $\lambda$ 
of the monomial in $N+1$ variables that arises from $m$
when substituting in the parametrization of $\mathcal{S}_u$.

\begin{lemma} \label{lem:makestd}
Consider any monomial $x^\nu = x_0^{\nu_0} x_1^{\nu_1} \cdots x_n^{\nu_n}$
of degree $|\nu|$ in the coordinates of $\PP^n$. For any nonnegative integer 
$i \leq \nu \cdot u$ 
there exists a unique monomial
$m$  in the coordinates on $\PP^N$ such that $m$ is standard and maps to
$x^\nu \lambda^i$ under the parametrization of the scroll $\mathcal{S}_u$.
\end{lemma}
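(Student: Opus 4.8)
The plan is to set up a bijection between standard monomials on $\PP^N$ lying over $x^\nu$ and the integers $i$ in the range $0 \le i \le \nu\cdot u$, using the Gr\"obner basis of $\mathcal{S}_u$ from Lemma~\ref{lem:GB}. First I would fix the block structure: the coordinates on $\PP^N$ split into $n+1$ groups, where group $j$ consists of the variables mapping to $x_j, x_j\lambda, \ldots, x_j\lambda^{u_j}$. A monomial $m$ on $\PP^N$ that maps to a scalar multiple of $x^\nu \lambda^i$ must use exactly $\nu_j$ factors from group $j$ (counted with multiplicity), and if those factors carry $\lambda$-exponents $a_{j,1}, \ldots, a_{j,\nu_j}$ with $0 \le a_{j,k} \le u_j$, then $i = \sum_{j,k} a_{j,k}$. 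So the question becomes: among all such choices of exponent multisets, which one is standard, and for which values of $i$ does a standard choice exist?

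The key structural fact I would extract from Lemma~\ref{lem:GB} is that the initial ideal is generated by squarefree quadratic monomials, one for each ``crossing'' pair of entries in the concatenated Hankel matrix; a monomial is standard iff it avoids all of these. I would describe the standard monomials concretely: within a single Hankel block, the standard monomials in a fixed degree $\nu_j$ are exactly the monomials $x_j^{a}(x_j\lambda)^{b}(x_j\lambda^{2})^{c}\cdots$ supported on a set of \emph{consecutive} $\lambda$-powers $\{p, p+1\}$ — more precisely, for a fixed total $\lambda$-weight the standard representative is the unique one whose exponents are ``as balanced as possible'' (a staircase condition coming from the $2\times 2$-minors $m_{k}m_{k+2} \to m_{k+1}^2$). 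This gives, for each block $j$ and each achievable weight $i_j$ with $0 \le i_j \le \nu_j u_j$, exactly one standard monomial of degree $\nu_j$ and weight $i_j$ in that block, and conversely every weight in that interval is achieved. Then I would invoke that the diagonal order behaves blockwise — a product of standard monomials from distinct blocks is standard, since the only initial terms involving two different blocks come from minors whose column indices straddle the blocks, and I would check these are likewise avoided precisely when each block-factor is standard. Existence and uniqueness of $m$ with weight $i$ then follows: write $i = i_0 + i_1 + \cdots + i_n$ — such a decomposition with $0 \le i_j \le \nu_j u_j$ exists for every $i$ in $[0, \nu\cdot u]$ and, crucially, the \emph{standard} representative forces a \emph{unique} such decomposition, because the staircase/balancing condition pins down how the weight is distributed across blocks.

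The main obstacle I anticipate is precisely this last uniqueness claim: a priori $i$ could be split among the blocks in many ways, so I must argue that the standardness condition across blocks rules out all but one. I expect to handle this by a greedy/lexicographic argument — process the blocks in order of increasing $u_j$ (which matches the hypothesis $u_0 \le \cdots \le u_n$), and at each step show that the cross-block initial terms force the earlier (smaller-$u_j$) blocks to absorb as much $\lambda$-weight as their capacity allows before any weight spills into later blocks, or some similarly rigid rule. Concretely, if two adjacent blocks $j, j+1$ both carried ``non-extremal'' weight, one could find an initial term from a minor spanning the two blocks; avoiding it forces one of them to the boundary of its range. Iterating collapses the ambiguity to a single admissible decomposition. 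Once that is established, the bijection $m \leftrightarrow i$ is immediate and the lemma follows.
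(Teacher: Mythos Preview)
Your approach is genuinely different from the paper's, and the underlying idea is sound, but as written it contains a real gap. The paper does not argue block by block at all: it observes that the cone of the toric variety $\mathcal{S}_u$ consists of the lattice points $(\nu,i)$ with $0\le i\le \nu\cdot u$, invokes the fact that a squarefree initial ideal corresponds to a unimodular regular triangulation \cite[Corollary~8.9]{GBCP}, and concludes that every such lattice point is a unique $\NN$-combination of generators spanning a single simplex---which is exactly the desired unique standard lift. No explicit combinatorics is needed.

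The gap in your argument is the assertion in the second paragraph that a product of within-block standard monomials is automatically globally standard. This is false already for $u=(1,2,3)$: the monomial $a_0$ is standard in the $a$-block and $b_1$ is standard in the $b$-block, yet $a_0 b_1$ is one of the generators of the initial ideal~(\ref{eq:123init}). So existence does not follow by simply splitting $i=\sum_j i_j$ and multiplying the block-standard pieces. Your greedy repair in the third paragraph is in the right direction but not sharp: the cross-block leading terms are precisely the products $v_{j,a}\,v_{k,c}$ with $j<k$, $a<u_j$, $c>0$, so the standardness condition across blocks is the \emph{asymmetric} one ``$i_j=\nu_j u_j$ or $i_k=0$'' for every pair $j<k$, not merely ``at least one of an adjacent pair is extremal''. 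Your weaker condition still permits, for instance, $(i_j,i_{j+1})=(0,\nu_{j+1}u_{j+1})$, which is \emph{not} standard when $u_j,u_{j+1}>0$, so it does not collapse the ambiguity. The correct condition forces a unique threshold $j^\ast$ with $i_j=\nu_j u_j$ for $j<j^\ast$ and $i_j=0$ for $j>j^\ast$; then $j^\ast$ and $i_{j^\ast}$ are determined by $i$, and both existence and uniqueness follow at once. Patched this way your argument is complete and, unlike the paper's, yields the standard lift explicitly.
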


\begin{proof}
The polyhedral cone corresponding to the toric variety $\mathcal{S}_u$
consists of all pairs $(\nu,i) \in \RR_{\geq 0}^{n+2}$ with $0 \leq i \leq \nu \cdot u$.
Its lattice points correspond to monomials $x^\nu t^i $ on $\mathcal{S}_u$.
Since the initial ideal in Lemma \ref{lem:GB} is square-free,
the associated regular triangulation of the polytope is unimodular, by \cite[Corollary 8.9]{GBCP}.
Each lattice point $(\nu,i)$ has a unique representation as an $\NN$-linear combination
of generators that span a cone in the triangulation. Equivalently, $x^\nu t^i$ has
a unique representation as a standard monomial in the $N+1$ coordinates on $\PP^N$.
 \end{proof}

\noindent We refer to
the standard monomial  $m$ in Lemma \ref{lem:makestd}
as the {\em $ i$\textup{th} distortion} of the given $\,x^\nu$. 

\begin{example} \rm
In Example~\ref{ex:123} we have $n = 2$, $N = 8$, and $\mathcal{S}_u$
corresponds to the cone over a triangular prism. The lattice points in that cone
are the monomials $x_0^{\nu_0} x_1^{\nu_1} x_2^{\nu_2} t^i$ with 
$ 0 \leq i \leq \nu_0 + 2 \nu_1 +3\nu_2$. Using the ambient coordinates on $\PP^8$,
each  such monomial is written uniquely as $ \,a_0^{\nu_{00}} a_1^{\nu_{01}}
 b_0^{\nu_{10}}  b_1^{\nu_{11}}  b_2^{\nu_{12}} 
  c_0^{\nu_{20}}  c_1^{\nu_{21}}  c_2^{\nu_{22}}  c_3^{\nu_{23}}\,$
  that is not in (\ref{eq:123init}) and satisfies
$ \,  
  \nu_{00} + \nu_{01} = \nu_0,\,
   \nu_{10} + \nu_{11}  + \nu_{12} = \nu_1,\,
   \nu_{20} + \nu_{21} + \nu_{22}  + \nu_{23} = \nu_2,\,
   \nu_{01} + \nu_{11} + 2 \nu_{12} + \nu_{21} + 2 \nu_{22} + 3 \nu_{23} = i $.
   For instance, if $x^\nu = x_0^3 x_1^2 x_2^2$ then its various distortions,
   for $0 \leq i \leq 13$, are the monomials 
      $$ \begin{matrix} 
   a_0^3 b_0^2 c_0^2, \,a_0^3 b_0^2 c_0 c_1, \,a_0^3 b_0^2 c_0 c_2, \,a_0^3 b_0^2 c_0 c_3, \,
   a_0^3 b_0^2 c_1 c_3,   \,   a_0^3 b_0^2 c_2c_3, \,
      a_0^3 b_0^2 c_3^2, \\ a_0^3 b_0 b_1 c_3^2, \,
      a_0^3 b_0 b_2 c_3^2,\, a_0^3 b_1 b_2 c_3^2, \,
      a_0^3 b_2^2 c_3^2, \, a_0^2 a_1 b_2^2 c_3^2, \,a_0 a_1^2 b_2^2 c_3^2, \,
      a_1^3 b_2^2 c_3^2.
      \end{matrix}
      $$
\end{example}

Given any homogeneous polynomial $p$ in the unknowns $x_0,x_1,\ldots,x_n$,
we write $p_{[i]}$ for the polynomial on $\PP^N$ that is
obtained by replacing each monomial in $p$ by its $i$th distortion.

\begin{example} \rm
For the scroll in Example~\ref{ex:123}, the
distortions of the sextic $p = a^6{+}a^2b^2c^2$ are
$$
p_{[0]}  = a_0^6+a_0^2 b_0^2 c_0^2,\,\,
p_{[1]} =  a_0^5 a_1+a_0 a_1 b_0^2 c_0^2\,,\, \ldots,\,\,
p_{[5]} =  a_0 a_1^5+a_1^2 b_1 b_2 c_0^2,\,\,
p_{[6]} =   a_1^6+a_1^2 b_2^2 c_0^2, \ldots
         $$
\end{example}

The following result shows how the equations of $X_{[u]}$
can be read off from those of $X$.

\begin{theorem}
\label{thm:mingens}
The ideal of the distortion variety $X_{[u]}$ is generated by the $\binom{N-n}{2}$ quadrics
that define $\mathcal{S}_u$ together with the
distortions $p_{[i]}$ of the elements $p$ in the reduced Gr\"obner basis 
of $X$ for a term order that refines the weights $-u$.
Hence, the ideal is generated by polynomials whose degree is at most
 the maximal degree of any monomial generator of $\,M = {\rm in}_{-u}(X)$.
\end{theorem}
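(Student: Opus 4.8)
The plan is to exhibit the asserted generating set and prove its ideal equals the prime ideal $I(X_{[u]})$ by a Gr\"obner-degeneration argument that matches it against the ideal of $\mathcal{S}_u$. Let $J \subset \CC[\PP^N]$ be the ideal generated by the $\binom{N-n}{2}$ Hankel minors of $\mathcal{S}_u$ together with all the distortions $p_{[i]}$, where $p$ ranges over the reduced Gr\"obner basis $\mathcal{G}$ of $I(X)$ with respect to a term order refining $-u$, and $i$ ranges over $0,1,\ldots,\deg_u$ of the leading term of $p$ (more precisely, over $0 \le i \le \nu\cdot u$ where $x^\nu$ is any monomial of $p$ — but since all monomials in a homogeneous $p$ of a given degree can have different $u$-weights, we take $i$ up to the maximal $u$-weight of a monomial in $p$; this is exactly the range for which $p_{[i]}$ is defined via Lemma~\ref{lem:makestd}). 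Clearly $J \subseteq I(X_{[u]})$: each Hankel minor vanishes on $\mathcal{S}_u \supseteq X_{[u]}$, and each $p_{[i]}$ pulls back under the parametrization~(\ref{eq:upara}) to $\lambda^i \cdot p(x_0,\ldots,x_n)$, which vanishes identically on $X \times \CC$ because $p \in I(X)$. So the whole content is the reverse inclusion $I(X_{[u]}) \subseteq J$.

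First I would set up the weight degeneration on $\PP^N$. Use the $\NN$-grading on $\CC[\PP^N]$ in which the coordinate $x_i^{(k)}$ (the one mapping to $x_i\lambda^k$) gets weight $k$; call this the $\lambda$-weight, and note it is the ``weight of a monomial'' appearing before Lemma~\ref{lem:makestd}. Combine this with the diagonal term order from Lemma~\ref{lem:GB}, and on top of it put a term order that, among standard monomials of a fixed $\lambda$-weight mapping to a fixed $x^\nu$, is consistent with the refinement of $-u$ used to pick $\mathcal{G}$. The key structural fact, which I would isolate as the main step, is that the initial ideal $\mathrm{in}(J)$ with respect to this combined order equals $\mathrm{in}(\mathcal{S}_u) + \langle \,\text{(leading term of }p_{[i]})\,\rangle$, and that this monomial ideal coincides with $\mathrm{in}(I(X_{[u]}))$. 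The reason the leading terms behave well: modulo the Hankel quadratic relations, every element of $\CC[\PP^N]$ reduces to a $\CC$-linear combination of standard monomials, and Lemma~\ref{lem:makestd} says standard monomials are in bijection with pairs $(x^\nu, i)$. Under this bijection the distortion $p_{[i]}$ of a Gr\"obner basis element $p$ has leading standard monomial equal to the $i$th distortion of $\mathrm{in}_{-u}(p)$, because the distortion operation is "degree-preserving" in the precise sense of the toric/unimodular dictionary: the unique standard representative of $\mathrm{in}_{-u}(p)\cdot\lambda^i$ dominates the unique standard representatives of the other monomials of $p$ (each at the same $\lambda$-weight $i$) exactly because $\mathrm{in}_{-u}(p)$ dominates them in the $x$-variables under the $-u$-refinement.

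The crux, then, is a Hilbert-function / dimension count: show that the monomial ideal $Q := \mathrm{in}(\mathcal{S}_u) + \langle\, \text{leading monomials of all } p_{[i]}\,\rangle$ has the same Hilbert function as $I(X_{[u]})$ (equivalently, that $\CC[\PP^N]/Q$ and $\CC[\PP^N]/I(X_{[u]})$ have equal Hilbert series). One inclusion $Q \subseteq \mathrm{in}(J) \subseteq \mathrm{in}(I(X_{[u]}))$ is automatic from $J \subseteq I(X_{[u]})$. For the reverse, I would count standard monomials modulo $Q$: a monomial survives iff it is $\mathcal{S}_u$-standard (so corresponds to some $(x^\nu,i)$ by Lemma~\ref{lem:makestd}) and, in addition, $x^\nu$ is not divisible by any $\mathrm{in}_{-u}(p)$ with $p\in\mathcal{G}$, i.e.\ $x^\nu$ is standard for $\mathrm{in}_{-u}(X) = M$. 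Thus in degree-and-$\lambda$-weight $(|\nu|, i)$ the count of surviving monomials equals $\#\{x^\nu : |\nu| \text{ fixed}, x^\nu \notin M\}$ restricted to those $\nu$ with $0 \le i \le \nu\cdot u$ — but this is exactly the Hilbert function of $X$ refined by the scroll fibration, hence exactly $\dim$ of the graded piece of $\CC[X_{[u]}]$. Summing over $i$ recovers the ordinary Hilbert function, giving $\dim(\CC[\PP^N]/Q)_t = \dim(\CC[X_{[u]}]/I(X_{[u]}))_t$ for all $t$. Therefore $Q = \mathrm{in}(I(X_{[u]}))$, which forces $\mathrm{in}(J) = \mathrm{in}(I(X_{[u]}))$ and hence $J = I(X_{[u]})$.

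Finally, the degree bound: every stated generator has degree equal to the degree of a Hankel quadric (namely $2$) or of a distortion $p_{[i]}$ of a Gr\"obner basis element $p \in \mathcal{G}$; since the distortion operation acts monomial-by-monomial and is $\deg$-preserving in $\PP^N$ only up to the scroll relations — more carefully, $p_{[i]}$ has $\PP^N$-degree equal to the $\PP^n$-degree of $p$, which in turn is at most the maximal degree of a monomial generator of $M = \mathrm{in}_{-u}(X)$ because the reduced Gr\"obner basis $\mathcal{G}$ has leading monomials generating $M$ and degree of $p$ equals degree of $\mathrm{in}_{-u}(p)$. This yields the claimed bound. The step I expect to be the real obstacle is the careful verification that the leading term of $p_{[i]}$ under the combined order is the $i$th distortion of $\mathrm{in}_{-u}(p)$, and that no unexpected cancellation or degree jump occurs in $p_{[i]}$ when a monomial of $p$ has $u$-weight strictly less than $i$ is forced (this can happen for homogeneous $p$ with spread-out $u$-weights); handling that edge case correctly — possibly by restricting attention to the $u$-homogeneous components of $p$, or arguing that only the top $u$-weight component contributes the leading term — is the delicate point, and the rest is bookkeeping on the unimodular triangulation from Lemma~\ref{lem:makestd}.
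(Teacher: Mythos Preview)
Your approach is correct in outline but takes a genuinely different route from the paper. The paper gives a short, direct membership argument: take $F \in I(X_{[u]})$, reduce it modulo the Hankel minors so that it becomes standard of a fixed $\lambda$-weight $i$, observe that its pullback to $\CC[x,\lambda]$ is then $\lambda^i f(x)$ for some $f \in I(X)$, write $f = \sum h_j p_j$ using the reduced Gr\"obner basis (where the $-u$-refinement guarantees $\deg_{-u}(h_j p_j) \le \deg_{-u}(f)$), and then lift term-by-term via $\widetilde F = \sum (h_j)_{[a_j]} (p_j)_{[b_j]}$ with $a_j + b_j = i$; since $F - \widetilde F$ lies in $I(\mathcal{S}_u)$, we are done. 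No term order on $\PP^N$, no Hilbert-function count.

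Your Hilbert-function/degeneration argument, by contrast, actually proves more once the details are filled in: it shows that the Hankel minors together with the distortions $p_{[i]}$ form a Gr\"obner basis of $I(X_{[u]})$, not merely a generating set. The pieces you need are all there. The divisibility claim underlying the standard-monomial count for $Q$ is easy once stated cleanly: any divisor of an $\mathcal{S}_u$-standard monomial is again standard (true for any monomial ideal), and a sub-product of the factors of the standard monomial for $(x^\nu,j)$ realizing $x^\mu \mid x^\nu$ is the standard monomial for $(x^\mu, i)$ for the induced $i$; conversely, divisibility in $\PP^N$ pushes forward to divisibility of $\phi$-images. The term order you want on $\PP^N$ is: first compare the $x$-images under your $-u$-refinement $\tau$ on $\CC[x]$ (lifted via $x_s^{(a)} \mapsto x_s$), then break ties with the diagonal order. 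This makes the Hankel minors' leading terms diagonal (both terms share the same $x$-image) and forces $\mathrm{in}_\prec(p_{[i]}) = (\mathrm{in}_\tau p)_{[i]}$ (the $x$-images of the terms of $p_{[i]}$ are exactly the terms of $p$). Two small corrections: the valid range of $i$ for $p_{[i]}$ is $0 \le i \le u \cdot (\text{exponent of } \mathrm{in}_\tau p)$, which is the \emph{minimal} $u$-weight among monomials of $p$, not the maximal one; and the ``edge case'' you worry about does not arise, because $p_{[i]}$ is only formed for $i$ in this range, where every monomial of $p$ admits an $i$th distortion.

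In short: your route works and yields a Gr\"obner basis as a bonus, but the paper's lifting argument is considerably shorter and more transparent for the bare generation statement.
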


\begin{proof}
Since $X_{[u]} \subset \mathcal{S}_u$, the binomial quadrics
that define $\mathcal{S}_u$ lie in the ideal $I(X_{[u]})$.  
Also, if $p$ is a polynomial that vanishes on $X$ then all of its distortions $p_{[i]}$
are in $I(X_{[u]})$ because
$$p_{[i]}\big{(}x_{0}, \lambda x_{0}, \ldots, \lambda^{u_{0}}x_{0}, x_{1}, \ldots, \lambda^{u_{n}} x_{n}\big{)} 
\, = \, \lambda^{i} \cdot p(x) \, = \, 0 
\qquad \hbox{for} \,\, \hbox{$\lambda \in \CC$ and $x \in X$}. $$

Conversely, consider any homogeneous polynomial
$F $ in $ I(X_{[u]})$.  It must be shown that $F$ is a polynomial linear combination of 
the specified quadrics and distortion polynomials.  Without loss of generality, we may assume that 
$F$ is standard with respect to the Gr\"obner basis in 
Lemma \ref{lem:GB}, and that each monomial in $F$ has the same weight $i$.
This implies
$$F\big{(}x_{0}, \lambda x_{0}, \ldots, \lambda^{u_{0}}x_{0}, x_{1}, \ldots, \lambda^{u_{n}} x_{n}\big{)} \, = \, \lambda^{i} f(x)$$
for some homogeneous $f \in \CC[x_0, \ldots, x_n]$. 
Since $F \in I(X_{[u]})$, we have $f \in I(X)$.  We write
$$ f \,\,= \,\,h_1 p_1 + h_2 p_2 + \cdots + h_k p_k, $$
where $p_1,p_2, \ldots, p_k$ are in the reduced Gr\"obner basis of $I(X)$ with respect to a term order refining
$-u$, and the multipliers satisfy
$\textup{deg}_{-u}(f) \geq \textup{deg}_{-u}(h_j p_j) = \textup{deg}_{-u}(h_j) + \textup{deg}_{-u}(p_j)$ for
 $j = 1, 2,\ldots, k$.
Since $F = f_{[i]}$, we have $-\textup{deg}_{-u}(f) \geq i$.  
Hence, for each $j $ there exist nonnegative integers $a_j$ and $b_j$
such that $a_j + b_j = i$ and $-\textup{deg}_{-u}(h_j) \geq a_j$ and $-\textup{deg}_{-u}(p_j) \geq b_j$.  The latter inequalities imply that the distortion polynomials
$(h_j)_{[a_j]}$ and $(p_j)_{[b_j]}$ exist. 

Now consider the following polynomial in the coordinates on $\PP^{N}$:
$$ \widetilde{F} \,=\, (h_1)_{[a_1]} \cdot (p_1)_{[b_1]} \,+\, \cdots \,+\, (h_k)_{[a_k]} \cdot (p_k)_{[b_k]}. $$
By construction, $\widetilde{F}$ and $F$ both map to $\lambda^{i} f$ under the parameterization of the scroll $\mathcal{S}_{u}$.
Thus, $\widetilde{F} - F \in I(\mathcal{S}_u)$.  This shows that $F$ is a polynomial linear combination
of generators of $I(\mathcal{S}_u)$ and distortions of
Gr\"obner basis elements $p_1,\ldots,p_k$. This completes the proof.
\end{proof}

We illustrate this result with two examples.

\begin{example} \rm
\label{ex:disteqns1} 
If $X$ is a hypersurface of degree $d \geq 2$ then
the ideal $I(X_{[u]})$ is generated by binomial quadrics and distortion polynomials
of degree $d$. More generally, if the generators of $I(X)$ happen to be a Gr\"obner basis
for $-u$ then the degree of the generators of $I(X_{[u]})$ does not go up.
This happens for all the varieties from computer vision seen in Section~2.
\hfill $\diamondsuit $ \end{example}

In general, however, the maximal degree among the generators of $I(X_{[u]})$ can be 
much larger than that same degree for $I(X)$. This happens for complete intersection
curves in~$\PP^3$:

\begin{example} \rm
Let $X$ be the curve in $\PP^3$ obtained as the intersection of two random  surfaces of degree $4$.
We fix $u = (2,3,4,4)$.
The initial ideal $M = {\rm in}_{-u}(X)$ has $51$ monomial generators. The largest degree is $32$.
We now consider the distortion surface $X_{[u]}$ in $\PP^{12}$.
The ideal of $I(X_{[u]})$ is minimally generated by $133$ polynomials. The largest degree is~$32$.
\hfill $\diamondsuit $ \end{example}

\section{Multi-parameter Distortions}
\label{sec4}

In this section we study multi-parameter
distortions of a given projective variety $X \subset \PP^{n}$. 
Now, $\lambda = (\lambda_1, \ldots, \lambda_r)$ is a vector of $r$ parameters,
and $u = (u_0,\ldots,u_n)$ where $u_i = \{u_{i,1}, u_{i,2}, \ldots,u_{i,s_i}\}$
 is an arbitrary finite subset of $\NN^r$. Each point $u_{i,j}$
 represents a  monomial in the $r$ parameters, denoted $\lambda^{u_{i,j}}$.    We set
$|u| = \sum_{i=0}^n |u_i| = \sum_{i=0}^n s_i$ and $N = |u| -1$.  
The role of the scroll is played by a toric variety $\mathcal{C}_u$
of dimension $n+r$ in $\PP^N$ that is usually not smooth.
Generalizing (\ref{eq:upara}), we define the {\em Cayley variety} $\,\mathcal{C}_u$ 
in $\PP^N$ by the parametrization
\begin{equation}
\label{eq:uparamulti}
\bigl(
 x_0 \lambda^{u_{0,1}} : x_0 \lambda^{u_{0,2}}: \cdots: x_0 \lambda^{u_{0,s_0}} \,:\,
 x_1 \lambda^{u_{1,1}}:   \cdots: x_1 \lambda^{u_{1,s_1}} \,:\, \cdots \,:\,
x_r \lambda^{u_{r,1}}: \cdots: x_r \lambda^{u_{r,s_r}} \bigr).
\end{equation}
The name was chosen because $\mathcal{C}_u$ is the toric variety
associated with the Cayley configuration of the configuration $u$.
Its convex hull is the  {\em Cayley polytope}; see \cite[\S 3]{DR} and \cite[Def.~4.6.1]{MS}.
 
The {\rm distortion variety} $X_{[u]}$ is defined as the closure of the set of all points
(\ref{eq:uparamulti}) in $\PP^N$ where $x \in X$ and $\lambda \in (\CC^*)^r$.
Hence $X_{[u]}$ is a subvariety of the Cayley variety $\mathcal{C}_u$,
typically of dimension $d+r$ where $d = {\rm dim}(X)$.
 Note that, even in the single-parameter setting $(r=1)$, we have generalized our construction, by permitting $u_{i}$
to not be an initial segment of $\NN$.

\begin{example} \rm Let $r =n=2$, $u_0 =  \{(0,0),(0,1)\}$,
$u_1  = \{(0,0),(1,0)\}$, $u_2 = \{(2,2),(1,1)\}$.
The Cayley variety $\mathcal{C}_u$ 
is the singular hypersurface in $\PP^5$
defined by $a_0 b_0 c_0 - a_1 b_1 c_1$.
Let $X$ be the conic in $\PP^2$ given by $x_0^2 + x_1^2 - x_2^2$.
The distortion variety $X_{[u]}$ is a threefold of degree $10$. Its ideal is
$\langle a_0 b_0 c_0-a_1 b_1 c_1, a_0^2 c_0^2+b_0^2 c_0^2-c_1^4,
     a_0^2 a_1 b_1 c_0+a_1 b_0^2 b_1 c_0-a_0 b_0 c_1^3,
     a_0^2 a_1^2 b_1^2+a_1^2 b_0^2 b_1^2-a_0^2 b_0^2 c_1^2 \rangle$.~$\diamondsuit$
\end{example}

\subsection{Two views with two or four distortion parameters}
\label{subsec41}

We now present some  motivating examples from computer vision.
Multi-dimensional distortions arise when several cameras have different
unknown radial distortions, or when the distortion function
$g(t) = 1+\mu t^2$ in (\ref{eq:x=(h(AU+b);g)})--(\ref{eq:second})
is replaced by a polynomial of higher degree.

We return to the setting of Section \ref{sec2}, and we introduce two
 distinct distortion parameters  $\lambda_{1}$ and $\lambda_{2}$,
 one for each of the two cameras.
 The role of  the equation (\ref{eq:0=x2TiKTEiKx1}) is played by
  \begin{eqnarray}
0 \! &=&
\mat{c}{U_2\\1+\lambda_{2}\|U_2\|^2}^\top\!\!\begin{bmatrix} x_{11} & 
x_{12} & x_{13} \\ x_{21} & x_{22} & x_{23} \\ x_{31} & x_{32} & x_{33} \end{bmatrix}
\mat{c}{U_1\\1+\lambda_{1}\|U_1\|^2}.
\label{eq:2param}
\end{eqnarray}
Just like in (\ref{eq:cdotm}), this translates into one linear equation
 $\V{c}^\top\V{m}   = 0 $, where now
$ \V{m}^\top =
[
x_{11},x_{12}, $ $
x_{13}, \lambda_{1}x_{13},
x_{21},x_{22},
x_{23}, \lambda_{1}x_{23},
x_{31},x_{31}\lambda_{2},
x_{32},x_{32}\lambda_{2},
x_{33},x_{33}\lambda_{2},x_{33}\lambda_{1},x_{33}\lambda_{1}\lambda_{2} ]$ and
$\V{c}^\top $ equals
\begin{small} 
$ \left[u_2u_1, \!
u_2v_1, \!
u_2, \!
u_2\|U_1\|^2 \!,
v_2u_1, \!
v_2v_1, \!
v_2,  
v_2\|U_1\|^2 \!,
u_1, 
u_1\|U_2\|^2 \!,
v_1, 
v_1\|U_2\|^2, 
1,\|U_1\|^2 \!,
\|U_2\|^2 \!,
\|U_1\|^2\|U_2\|^2
\right]
$.
\end{small}

Here $\V{c}$ is a real vector of data, whereas
$\lambda = (\lambda_1,\lambda_2)$ and  $x = (x_{ij})$ comprise $11$ unknowns.
The vector $\V{m}$ is a monomial parametrization of the form (\ref{eq:uparamulti}).
The corresponding configuration $u$ is given by
$ u_{11} =  u_{12} = u_{21} = u_{22} = \{(0,0)\}, u_{13} =  u_{23} = \{(0,0), (1,0)\}, u_{31} = u_{32} = \{(0,0), (0,1)\}, u_{33} = \{ (0,0), (1,0), (0,1), (1,1)\}$.
The Cayley variety $\mathcal{C}_u $ lives in $\PP^{15}$. It has
dimension $10$ and degree $10$.
Its toric ideal is generated by $11$ quadratic binomials.

Let $X \subset \PP^{8}$ be one of the two-view models $F$, $E$, $G$, or $G'$ in 
Subsection \ref{subsec23}.  The following table concerns the
distortion varieties $X_{[u]}$ in $\PP^{15}$. It is an extension of Table \ref{tab:comparison}.

\begin{table}[htp]
\small
\centering
\begin{tabular}{|r||c|c|c|c|c|c|c|c|}
\hline
 &  $\!{\rm dim}(X)$, &  
$\! {\rm dim}(X_{[u]}) \!$ & $\! {\rm deg}(X_{[u]}) \!$ & 
Prop~\ref{prop:degdis}& \# ideal gens of \\
& $\! {\rm deg}(X) \!$  & & & iterated &deg 2, 3, 4, 5\\
\hline 
$F$ in Example \ref{ex:intoP14}:  \,\,
$\lambda_{1}$+F+$\lambda_{2}$    & 7,  3 & 9  & 24 & 36 & 11, \,4, \,0, 0 \\
\hline
$E$ in Example \ref{ex:E}: \,\,
$\lambda_{1}$+E+$\lambda_{2}$   & 5, 10 & 7 & 76& 120 & 11, 20, 0, 0\ \\
\hline
$\! G$ in Example \ref{ex:G}: 
$\lambda_{1}$f+E+f$\lambda_{2}$ & 6, 15 & 8 & 104 & 180 & \, 11,  \,\,4, \,0, 4 \ \\
\hline
$\! G'$ in Example \ref{ex:Gprime}: 
$\lambda_{1}$+E+f$\lambda_{2}$ &   6, 9 & 8 & 56 & 108 & \, \,11, 4, 15, 0 \ \\
\hline 
\end{tabular}
\caption{\small{Dims, degrees, mingens of two-view models and their two-parameter radial distortions.}}
\label{tab:2param}
\end{table}

On each $X_{[u]}$ we consider linear systems of equations 
$\V{c}^\top\V{m}   = 0 $  that arise from point correspondences.
For a minimal problem, the number of such epipolar constraints is ${\rm dim}(X_{[u]})$,
and the expected number of its complex solutions is ${\rm deg}(X_{[u]})$.
The last column summarizes the number of minimal generators of
the ideal of $X_{[u]}$. For instance, the variety $X_{[u]} =E_{[u]} $ for
essential matrices is defined by $11$ quadrics (from $\mathcal{C}_u$),
$20$ cubics, $0$ quartics and $0$ quintics.
If we add $7$ general linear equations to these then we have a system
with $76$ solutions in $\PP^{15}$. The penultimate column of Table \ref{tab:2param} 
gives an upper bound on ${\rm deg}(X_{[u]})$ that is obtained
by applying Proposition~\ref{prop:degdis} twice, after
decomposing $u$ into two one-parameter distortions.

We next discuss four-parameter distortions for two cameras.  These are based on the following model for 
 epipolar constraints, which is a higher-order version of equation~(\ref{eq:2param}):
\begin{eqnarray}
0 \! &=&
\mat{c}{U_2\\1+\lambda_{2}\|U_2\|^2+\mu_{2}\|U_2\|^4}^\top \begin{bmatrix} x_{11} & x_{12} & x_{13} \\ x_{21} & x_{22} & x_{23} \\ x_{31} & x_{32} & x_{33} \end{bmatrix}
\mat{c}{U_1\\1+\lambda_{1}\|U_1\|^2+\mu_{1}\|U_1\|^4}.
\label{eq:4param}
\end{eqnarray}
As before, the $3 \times 3$-matrix $x = (x_{ij})$  belongs to a
two-view camera model $E$, $F$, $G$ or $G'$.
We rewrite (\ref{eq:4param}) 
as the inner product  $\V{c}^\top\V{m}   = 0 $  of two vectors,
where $\V{c}$ records the data and $\V{m}$ is a parametrization for the 
distortion variety. We now have $n=9, r = 4$ and $|u| = 25$.
  The configurations in $\NN^{4}$ that furnish the degrees for this four-parameter distortion are
\begin{small}
$$
\begin{matrix}
 u_{11} =  u_{12} = u_{21} = u_{22} = \{{\bf 0}\}, \\ u_{13} =  u_{23} = \{{\bf 0}, (1,0,0,0), 
 (0{,}0{,}1{,}0)\}, u_{31} = u_{32} = \{{\bf 0}, (0,1,0,0), (0,0,0,1)\}, \\ \!\!\!\!
u_{33} = \{ {\bf 0}, (1,0,0,0), (0,1,0,0), (0,0,1,0), (0,0,0,1), (1,0,1,0),(1,0,0,1),(0,1,1,0),(0,1,0,1)\}.
\end{matrix}
$$
\end{small}

\noindent Each of the resulting distortion varieties $X_{[u]}$ lives in $\PP^{24}$ and satisfies
 ${\rm dim}(X_{[u]}) = {\rm dim}(X) + 4$.
As before, we may compute the prime ideals for these distortion varieties by elimination,
for instance in \texttt{Macaulay2}. From this, we
 obtain the information displayed in Table \ref{tab:4param}.

\begin{table}[htp]
\small
\centering
\begin{tabular}{|r||c|c|c|c|c|c|}
\hline
 &dimension&degree&quadrics & cubics & quartics & quintics \\
\hline
$F$ in Example \ref{ex:intoP14}:  \,\,
$\lambda_{1}\mu_{1}$+F+$\lambda_{2}\mu_{2}$ & 11 & 115 & 51 & 9 & & \\
\hline
$E$ in Example \ref{ex:E}: \,\,
$\lambda_{1}\mu_{1}$+E+$\lambda_{2}\mu_{2}$ & 9 & 354 & 51 & 34 & & \\
\hline
$\! G$ in Example \ref{ex:G}:~$\lambda_{1}\mu_{1}$f+E+f$\lambda_{2}\mu_{2}\!$
& 10 & 245 & 51 & 9 & 42 & \\
\hline
$\! G'$ in Example \ref{ex:Gprime}:~$\lambda_{1}\mu_{1}$+E+f$\lambda_{2}\mu_{2}\!$ & 10 & 475 & 51 & 9 & & 9 \\
\hline
\end{tabular}
\caption{\small{Dimension, degrees, number of minimal generators for four-parameter radial distortions.}}
\label{tab:4param}
\end{table}

In each case, the 51 quadrics are binomials that define the ambient 
Cayley variety $\mathcal{C}_u $ in $ \PP^{24}$.
The  minimal problems are now more challenging than those in Tables
 \ref{tab:comparison} and \ref{tab:2param}.
 For instance, to recover the essential matrix along with four
 distortion parameters from  $9$ general point correspondences,
   we must solve a polynomial system that has $354$ complex solutions.

\subsection{Iterated distortions and their tropicalization}

In what follows we take a few steps towards a geometric
theory of multi-parameter distortions.
We begin with the observation that multi-parameter distortions arising in practise,
including those in Subsection~\ref{subsec41},
will often have an inductive structure. Such a structure allows us to decompose them
as successive one-parameter distortions where the degrees form an initial segment of 
the non-negative integers $\NN$. In that case the results of
Section \ref{sec2} can be applied iteratively. The following proposition
characterizes when this is possible.
For $u_i \subset \NN^r$ and $k < r$, we write $u_{i}\rvert_{\NN^{k}} \subset \NN^k$
for the projection of the set $u_{i}$  onto the first $k$ coordinates.

\begin{proposition} \label{prop:successive}
Let $u = (u_0, \ldots, u_n)$ be a sequence of finite nonempty subsets of $\NN^r$. 
The multi-parameter distortion with respect to $u$ in $\lambda_{1}, \ldots, \lambda_{r}$
is a succession of one-parameter distortions by initial segments, in $\lambda_{1}$, 
then $\lambda_{2}$, and so on, if and only if
each fiber~of the maps $u_{i}\rvert_{\NN^{k}} \twoheadrightarrow u_{i}\rvert_{\NN^{k-1}}$
becomes an initial segment of $\,\NN$ when  projected onto 
 the $k^{\textup{th}}$ coordinate. This condition holds when
each $u_i$ is an order ideal in the poset $\NN^r$, 
with coordinate-wise order.
\end{proposition}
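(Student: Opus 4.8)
The plan is to unwind the definitions so that the iterated-distortion condition becomes a purely combinatorial statement about the configuration $u$, and then verify the combinatorial statement directly. First I would make precise what it means for the distortion with respect to $u$ to be ``a succession of one-parameter distortions by initial segments.'' Performing a one-parameter distortion in $\lambda_k$ (by an initial segment) on a variety already sitting in some $\PP^M$ replaces each coordinate $y$ by a block $y, y\lambda_k, y\lambda_k^2, \ldots, y\lambda_k^{e(y)}$ for some $e(y)\in\NN$; doing this for $k=1$, then $k=2$, and so on, produces exactly the parametrization \eqref{eq:uparamulti} precisely when the set of exponent vectors $u_i$ attached to $x_i$ is built up layer by layer: after the first $k$ stages the coordinates correspond to the points of $u_i\rvert_{\NN^k}$, and the $(k+1)$st stage expands each such point $p$ into the fiber of $u_i\rvert_{\NN^{k+1}}\twoheadrightarrow u_i\rvert_{\NN^k}$ over $p$. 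For this expansion to be a genuine one-parameter distortion by an initial segment, that fiber — viewed through its $(k+1)$st coordinate — must be exactly $\{0,1,\ldots,m\}$ for some $m$. This is the content of the stated iff, so the forward and backward implications are essentially a matter of carefully matching the two parametrizations stage by stage; I would do this as a short induction on $r$.

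The second, and more substantive, half of the statement is the sufficient condition: if each $u_i$ is an order ideal (down-set) in $\NN^r$ with the coordinatewise order, then the fiber condition holds. The key step here is the following elementary claim: if $S\subseteq\NN^r$ is an order ideal, $k\le r$, and $q\in S\rvert_{\NN^{k-1}}$, then the set of values $t$ such that some point of $S$ projects to $q$ in the first $k-1$ coordinates and has $k$th coordinate $t$ is an initial segment of $\NN$. I would prove this by a downward-closure argument: pick a point $p\in S$ lying over $q$ with $k$th coordinate $t$; for any $0\le t'<t$, replace the $k$th coordinate of $p$ by $t'$ and leave the others unchanged, obtaining a point $p'\le p$ coordinatewise, hence $p'\in S$ since $S$ is an order ideal; and $p'$ still projects to $q$ on the first $k-1$ coordinates while having $k$th coordinate $t'$. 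Therefore every $t'<t$ also occurs, so the set of occurring values is downward closed in $\NN$, i.e.\ an initial segment. Applying this with $S=u_i$ for each $i$ and each $k$ gives exactly the fiber condition in the first part of the proposition, so the sufficiency follows.

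The main obstacle, such as it is, is not any single hard computation but rather setting up the bookkeeping in the first (iff) part cleanly: one must track how the coordinates of the intermediate ambient projective spaces are indexed by $u_i\rvert_{\NN^k}$ and confirm that the diagonal-order Gr\"obner basis structure of the intermediate scrolls (Lemmas \ref{lem:GB} and \ref{lem:makestd}) is compatible with iterating, so that ``succession of one-parameter distortions'' is well posed and agrees with the single Cayley-variety description. Once the indexing is fixed, both implications are a direct comparison of monomials, and the order-ideal sufficiency is the three-line down-closure argument above. I would therefore devote most of the written proof to the combinatorial heart — the initial-segment-of-fibers claim and its equivalence with the order-ideal hypothesis — and keep the parametrization-matching as a brief inductive remark.
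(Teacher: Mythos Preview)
Your proposal is correct and matches the paper's approach: the paper carries out exactly your bookkeeping for $r=2$, writing the decomposition as $u_i=\{(s,t):0\le s\le v_i,\ 0\le t\le w_{is}\}$ and identifying the fibers with $[0,v_i]$ and $[0,w_{is}]$, then observing that order ideals are precisely such sets with $w_{i0}\ge\cdots\ge w_{iv_i}$; your inductive version and your down-closure argument for the order-ideal case are the natural extension to general $r$. One simplification: the proposition is a purely combinatorial statement about the exponent sets $u_i$ and the parametrization \eqref{eq:uparamulti}, so Lemmas~\ref{lem:GB} and~\ref{lem:makestd} play no role --- you do not need any Gr\"obner or standard-monomial structure on the intermediate scrolls, only that each stage multiplies a coordinate by $1,\lambda_k,\ldots,\lambda_k^{e}$, which is read off directly from the parametrization.
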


\begin{proof}
We show this for $r=2$.
 The general case is similar but notationally more cumbersome.
 The two-parameter distortion given by a sequence
  $u$ decomposes into two one-parameter distortions if and only if there exist vectors
 $v = (v_0,\ldots,v_n) \in \NN^{n+1}$ and $w = (w_0,\ldots,w_n) \in \NN^{v_0+1} \oplus \cdots \oplus \NN^{v_n+1}$
such that $\, u_i = \{(s,t) : 0 \leq s \leq v_i \,$ and $\, 0 \leq t \leq w_{is} \} \,$ for $i=0,1,\ldots,n$.
This means that both the Cayley variety and any distortion subvariety decomposes as follows:
\begin{equation}
\label{eq:CSXdecomp}
 \mathcal{C}_u \,=\, (\mathcal{S}_{v})_{[w]} \quad \hbox{and} \quad
X_{[u]} \,=\, ({X}_{[v]})_{[w]}. 
\end{equation}
The segment $[0,v_i]$ in $\NN$ is the unique fiber of the map
$u_{i}\rvert_{\NN^{1}} \twoheadrightarrow u_{i}\rvert_{\NN^{0}} = \{0\}$.
The fiber of
$u_{i}\rvert_{\NN^{2}} \twoheadrightarrow u_{i}\rvert_{\NN^{1}} = [0,v_i]$
over an integer $s$ is the segment $[0,w_{is}]$ in $\NN$.
Thus the stated condition on  fibers is equivalent to the existence of the
non-negative integers $v_i$ and $w_{is}$. For the second claim, we note that the set
 $u_i$ is an order ideal in $\NN^2$ precisely when
$w_{i0} \geq w_{i1} \geq \cdots \geq w_{is}$.
\end{proof}

Proposition \ref{prop:successive} applies to all models seen in
Subsection~\ref{subsec41} since the $u_i$ are order ideals.

\begin{example} \rm
Consider the two-parameter radial distortion model for two cameras derived in
(\ref{eq:2param}). The vectors in the above proof are
$v = (0,0,1,0,0,1,0,0,1)$ and
$w = \bigl(0,0, (0,0), 0,0, (0,0) ,1,1,(1,1) \bigr)$.
The decomposition (\ref{eq:CSXdecomp}) holds
for all four models $X = E,F,G,G'$.
The penultimate column of Table \ref{tab:2param} says that 
the degree of $(X_{[v]})_{[w]}$ is
bounded above by $12 \cdot {\rm deg}(X)$. This follows directly from
Proposition ~\ref{prop:degdis} because $12 = |v| \cdot |w|$.
\hfill $\diamondsuit$
\end{example}

The exact degrees for $X_{[u]}$ shown in Tables \ref{tab:2param} 
and \ref{tab:4param}  were found using Gr\"obner bases. 
This computation starts from the ideal of $X$ and
incorporates the structure in
Proposition~\ref{prop:successive}.

\smallskip

{\em Tropical Geometry} \cite{MS} furnishes tools for
studying multi-parameter  distortion varieties.
In what follows,  we identify any variety $X \subset \PP^n$ with its reembedding into $\PP^N$,
where the $i$-th coordinate $x_i$ has been duplicated $|u_i|$ times.
Consider the distortion variety  ${\bf 1}_{[u]}$ of the  point ${\bf 1} = (1:1:\cdots:1)$ in $\PP^n$.
This is the toric variety in $\PP^N$ given~by the parametrization
$$ \bigl(  \lambda^{u_{0,1}} \! :  \lambda^{u_{0,2}} \! : \cdots:  \lambda^{u_{0,s_0}} \,:\,
  \lambda^{u_{1,1}} \! :   \cdots:  \lambda^{u_{1,s_1}} \,:\, \cdots \,:\,
 \lambda^{u_{r,1}} \! : \cdots:  \lambda^{u_{r,s_r}} \! \bigr)
\,\,\,\,{\rm for} \,\,\,\,
\lambda \,\in\, (\CC^*)^{r+1}. $$
\noindent Let $\tilde u$ denote the $(r{+}1) \times (N{+}1)$-matrix whose columns are
vectors in the sets $u_i$ for $i=0,1,\ldots,n$, augmented by
an extra all-one row vector $(1,1,\ldots,1)$. This matrix
represents the toric variety ${\bf 1}_{[u]}$.
Recall that the {\em Hadamard product} $\star$ of
two vectors in $\CC^{n+1}$ is their coordinate-wise product. This
operation extends to points in $\PP^n$ and also to subvarieties.

\begin{theorem}
\label{thm:tropicalization}
Fix a projective variety $X \subset \PP^n$ and any distortion system $u$, regarded as $r \times(N+1)$-matrix.
The distortion variety is the Hadamard product of $X$ with a toric variety:
$$ X_{[u]} \,\,=\,\, X \star {\bf 1}_{[u]} $$
Its tropicalization is the Minkowski sum of the tropicalization of $X$ with a linear space:
\begin{equation}
\label{eq:tropicalization}
 {\rm trop}(X_{[u]}) \,\, = \,\, {\rm trop}(X) + {\rm trop}({\bf 1}_{[u]}) \,\, = \,\,
 {\rm trop}(X) + {\rm rowspace}(\tilde u). 
 \end{equation}
\end{theorem}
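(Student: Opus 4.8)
The plan is to prove the two claims in Theorem~\ref{thm:tropicalization} in sequence: first the Hadamard product identity $X_{[u]} = X \star {\bf 1}_{[u]}$, and then the tropical statement~(\ref{eq:tropicalization}), which will follow from the first identity together with the standard fact that tropicalization turns Hadamard products into Minkowski sums.

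\medskip

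First I would establish $X_{[u]} = X \star {\bf 1}_{[u]}$ at the level of parametrizations. After the reembedding that duplicates the $i$-th coordinate $|u_i|$ times, a point of $X$ sits in $\PP^N$ as $(x_0,\ldots,x_0,x_1,\ldots,x_1,\ldots,x_n,\ldots,x_n)$ with $x_i$ repeated $s_i$ times, while a point of ${\bf 1}_{[u]}$ is $(\lambda^{u_{0,1}},\ldots,\lambda^{u_{0,s_0}},\ldots,\lambda^{u_{r,1}},\ldots,\lambda^{u_{r,s_r}})$. Their coordinate-wise product is exactly the vector in~(\ref{eq:uparamulti}). Hence the Zariski closure of the image of the map $X \times (\CC^*)^r \to \PP^N$ defining $X_{[u]}$ is contained in the closure of $\{x \star y : x \in X,\ y \in {\bf 1}_{[u]}\}$, which is by definition $X \star {\bf 1}_{[u]}$. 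For the reverse inclusion one uses that ${\bf 1}_{[u]}$ is itself the closure of the image of $(\CC^*)^r$ (and indeed of $(\CC^*)^{r+1}$, but the extra torus factor only rescales by a global scalar and so acts trivially on $\PP^N$); scaling $x$ by a constant likewise does nothing projectively, so the dense torus-orbit part of $X \star {\bf 1}_{[u]}$ already lies in the image defining $X_{[u]}$. Taking closures gives equality. I would be slightly careful here about whether $X$ meets the coordinate hyperplanes that were duplicated — but since we take closures on both sides, the generic behaviour on the open dense part determines everything.

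\medskip

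Second, for the tropical statement I would invoke the fundamental theorem relating tropicalization and Hadamard products: for subvarieties $Y, Z$ of a torus (or of $\PP^N$ away from the coordinate hyperplanes), $\operatorname{trop}(Y \star Z) = \operatorname{trop}(Y) + \operatorname{trop}(Z)$ as subsets of $\RR^{N}/\RR{\bf 1}$ — this is because the Hadamard product corresponds to the sum map on the ambient tori and tropicalization of a variety commutes with applying a monomial (here, multiplicative) map, sending the coordinatewise product to the coordinatewise sum; see \cite{MS}. Applying this with $Y = X$ (reembedded) and $Z = {\bf 1}_{[u]}$ gives $\operatorname{trop}(X_{[u]}) = \operatorname{trop}(X) + \operatorname{trop}({\bf 1}_{[u]})$. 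It then remains to identify $\operatorname{trop}({\bf 1}_{[u]})$ with $\operatorname{rowspace}(\tilde u)$: the toric variety ${\bf 1}_{[u]}$ is parametrized by Laurent monomials whose exponent matrix is exactly $\tilde u$ (the $r$ rows from the $u_{i,j}$'s together with the all-ones row accounting for projective scaling), and the tropicalization of such a monomial map's image is precisely the row space of the exponent matrix — a standard computation, equivalently the statement that the tropical variety of a toric variety is the linear span of its defining lattice. This yields~(\ref{eq:tropicalization}).

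\medskip

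I expect the main obstacle to be bookkeeping rather than conceptual: being precise about the ambient space ($\PP^N$ versus its dense torus, and the role of the extra all-ones row in $\tilde u$ which encodes the projective rescaling), and making sure the Hadamard-product-to-Minkowski-sum principle is being applied in the correct (projective, compactified) setting so that the closures match up. One should also note that $X_{[u]}$ may meet the boundary of $\PP^N$, so the cited tropical identity must be applied to the closures in $\PP^N$ (as in \cite[\S6]{MS}); since $\operatorname{trop}$ of a projective variety only sees the dense torus part anyway, this causes no real difficulty, but it is the point where I would be most careful to state the hypotheses correctly.
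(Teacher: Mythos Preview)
Your proposal is correct and follows essentially the same approach as the paper's own proof: first read off the Hadamard product identity from the parametrization~(\ref{eq:uparamulti}), then invoke the standard fact that tropicalization converts Hadamard products into Minkowski sums (the paper cites \cite[Prop.~5.1]{BCK} and \cite[Prop.~5.5.11]{MS}), and finally identify ${\rm trop}({\bf 1}_{[u]})$ with ${\rm rowspace}(\tilde u)$ via the exponent-matrix description of a toric variety. Your write-up is in fact more careful than the paper's three-sentence proof, particularly regarding closures and the boundary of $\PP^N$; those caveats are well placed but, as you note, cause no genuine difficulty.
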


\begin{proof}
This follows from equation (\ref{eq:uparamulti}) and \cite[\S 5]{MS}.
The toric variety ${\bf 1}_{[u]}$ in $\PP^N$ is represented by the matrix $\tilde u$, in the sense of
\cite{GBCP}, so its tropicalization is the row space of $\tilde u$.
Tropicalization takes Hadamard products into Minkowski sums, by
\cite[Prop.~5.1]{BCK}  or
\cite[Prop.~5.5.11]{MS}. 
\end{proof}

Theorem~\ref{thm:tropicalization} suggests the following method
for computing degrees of multi-parameter distortion varieties.
Let $L$ be the standard tropical linear space of codimension $r+\dim(X)$
in $\mathbb{R}^{N+1}/\mathbb{R} {\bf 1}$, as in \cite[Corollary 3.6.16]{MS}.
 Fix a general point $\xi$ in $\mathbb{R}^{N+1}/\mathbb{R} {\bf 1}$.
 Then ${\rm deg}(X_{[u]})$ is the number of points, counted with multiplicity, 
in the intersection of the tropical variety (\ref{eq:tropicalization})
with the tropical linear space  $\xi + L$.
In practise, $X$ is fixed and we  precompute ${\rm trop}(X)$.
That fan then gets intersected with $\xi+L + {\rm rowspace}(\tilde u)$ 
for various configurations $u$.

\begin{corollary}
The degree of $X_{[u]}$ is a piecewise-linear function 
in the maximal minors of~$\tilde u$.
\end{corollary}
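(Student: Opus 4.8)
The plan is to combine Theorem~\ref{thm:tropicalization} with the tropical intersection-theoretic degree formula stated in the paragraph following it. By that formula, $\deg(X_{[u]})$ equals the stable intersection number of the fan ${\rm trop}(X) + {\rm rowspace}(\tilde u)$ with a generic translate $\xi + L$ of the standard tropical linear space $L$ of the appropriate codimension in $\mathbb{R}^{N+1}/\mathbb{R}{\bf 1}$. The variety $X$ is fixed, so ${\rm trop}(X)$ is a fixed weighted balanced fan; the only thing that varies with the distortion data is the linear subspace $V := {\rm rowspace}(\tilde u)$. So I must show that the stable intersection number $\bigl({\rm trop}(X) + V\bigr) \cdot (\xi + L)$, as a function of $V$ ranging over $r$-dimensional subspaces of $\mathbb{R}^{N+1}/\mathbb{R}{\bf 1}$, is piecewise-linear in the Plücker coordinates of $V$, i.e.\ in the maximal minors of $\tilde u$.

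\textbf{Key steps.} First I would reduce the intersection number to a sum over top-dimensional cones: by genericity of $\xi$, the transverse intersection points of $({\rm trop}(X) + V)$ and $(\xi + L)$ all lie in the relative interiors of maximal cones, and the count is $\sum_\sigma m_\sigma$ over those cones $\sigma$ of ${\rm trop}(X)+V$ that meet $\xi + L$ transversally, where $m_\sigma$ is the product of the weight of $\sigma$ with a lattice index computed from the primitive generators of $\sigma$ and of the cones of $L$. Second, I would observe that the Minkowski sum ${\rm trop}(X) + V$ has maximal cones of the form $\tau + V$ where $\tau$ ranges over cones of ${\rm trop}(X)$ of complementary dimension (this uses that $V$ is a linear space, so adding it to a fan is a ``thickening''); the weight of $\tau + V$ is the weight of $\tau$, and the lattice multiplicity $m_\sigma$ of the contribution at such a cone is the absolute value of a determinant built from a fixed integer matrix (a choice of generators for $\tau$ and for $L$) together with an integer basis of the lattice $V \cap \mathbb{Z}^{N+1}$ — equivalently, a Plücker-coordinate vector of $V$. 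Third, and this is the crux, I would show that as $V$ moves within the locus where the combinatorial type of the arrangement of $V$ against ${\rm trop}(X)$ and $L$ is constant — a full-dimensional region in the Grassmannian, cut out by sign conditions on the maximal minors of $\tilde u$ — each surviving determinant $m_\sigma$ is, up to sign, \emph{linear} in the maximal minors of $\tilde u$. This is the standard phenomenon that a single $k\times k$ minor of a matrix, expanded along the rows coming from $\tilde u$, is multilinear in rows and hence linear in the maximal minors of $\tilde u$ by the Cauchy--Binet / Laplace expansion; the sign is fixed on each chamber, so the sum $\sum_\sigma m_\sigma$ is linear on each chamber, i.e.\ globally piecewise-linear.

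\textbf{Main obstacle.} The genuinely delicate point is making precise the claim that the \emph{set} of cones $\sigma$ contributing, and the \emph{signs} of their contributions, are locally constant as $V$ varies — in other words, that the chambers of piecewise-linearity really are cut out by sign conditions on the maximal minors of $\tilde u$, and that on the boundaries between chambers the two linear expressions agree (so no jumps, only changes of slope). I would handle this by the general principle that stable intersection is continuous: $\bigl({\rm trop}(X)+V\bigr)\cdot(\xi+L)$ is a well-defined integer depending continuously on $V$ (for $\xi$ generic relative to $V$), hence constant on connected components of the locus where it is ``generic'', and on each such component the determinantal formula above identifies it with a fixed linear functional in the Plücker coordinates; continuity across walls then forces the global function to be continuous piecewise-linear. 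I would also need to note that rescaling $\tilde u$ (equivalently, scaling all Plücker coordinates simultaneously) scales the degree linearly, consistent with homogeneity of the minors, so the piecewise-linear structure descends correctly to the projectivized data.
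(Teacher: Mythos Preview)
Your approach is essentially the paper's: invoke the tropical degree formula drawn from Theorem~\ref{thm:tropicalization}, set up a chamber decomposition of the Grassmannian on which the set of contributing cone-pairs is locally constant, and note that each local multiplicity is a determinant which Laplace-expands into a linear form in the maximal minors of $\tilde u$ --- the paper compresses all of this into three sentences citing \cite{CCDRS}, while you spell out the mechanism (Cauchy--Binet for the linearity, continuity of stable intersection for gluing across walls). One caveat: your closing remark that rescaling $\tilde u$ ``scales the degree linearly'' is off --- rescaling leaves ${\rm rowspace}(\tilde u)$ and hence $X_{[u]}$ and its degree unchanged while the minors do scale --- but this tangential aside plays no role in your main argument.
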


\begin{proof}
The maximal minors of $\tilde u$ are the Pl\"ucker coodinates of the row space of $\tilde u$.
An argument as in \cite[\S 4]{CCDRS} leads to
 a polyhedral chamber decomposition of the relevant Grassmannian, according to which pairs
of cones in ${\rm trop}(X)$ and in $\xi+L+{\rm rowspace}(\tilde u)$ actually intersect. Each such
intersection is a point, and its multiplicity is one of the maximal minors of $\tilde u$.
\end{proof}

\begin{table}[htp]
\small
\centering
\begin{tabular}{|r||c|c|c|c|c|c|c|c|}
\hline Variety $X$ &
$\!{\rm dim} \!$ &lineality& f-vector & multiplicities \\
\hline 
$F$ in Example \ref{ex:intoP14}  & 7 &  4 & (9, 18, 15) & $1_{15} $ \\
\hline
$E$ in Example \ref{ex:E} &  5 & 0 & (591, 4506, 12588, 15102, 6498) & 
$ 2_{6426}, 4_{72} $ \\
\hline
$\! G$ in Example \ref{ex:G}  &    6 & 1 & (32, 213, 603, 780, 390) & $1_{336}, 2_{54}$ \\
\hline
$\! G'$ in Example \ref{ex:Gprime}  &   6 & 1 & (100, 746, 2158, 2800, 1380) & $1_{800}, 2_{572}, 4_8 $  \\
\hline \end{tabular}
\caption{The tropical varieties in $\mathbb{R}^9/\mathbb{R} {\bf 1}$ associated with the two-view models.}
\label{tab:tropical}
\end{table}

Using the software {\tt Gfan} \cite{gfan}, we precomputed the tropical varieties ${\rm trop}(X)$
for our four basic two-view models, namely  $X = E,F,G,G'$. The results are summarized in
Table \ref{tab:tropical}.

The lineality space corresponds to a torus action on $X$. 
Its dimension is given in column~2.
Modulo this space, ${\rm trop}(X)$ is a pointed fan. Column 3 
records the number of $i$-dimensional cones for $i=1,2,3,\ldots$.
Each maximal cone comes with an integer multiplicity \cite[\S 3.4]{MS}.
These multiplicities are $1$, $2$ or $4$ for our examples.
Column 4 indicates their distribution.

\section{Application to Minimal Problems}
\label{sec5}
This section offers a case study for one \textit{minimal problem}
which has not yet been treated in the computer vision literature.
We build and test an efficient Gr\"obner basis solver for it.
Our approach follows  \cite{Kukelova-ECCV-2008, Kukdiss} and
applies in principle to any zero-dimensional  parameterized polynomial system.
This illustrates how the theory in Sections \ref{sec2}, \ref{sec3}, \ref{sec4}
ties in with practise.

We fix the distortion variety f+E+$\lambda$ in Table~\ref{tab:comparison}.
This is the variety $G''_{[v]}$ which lives in  $ \PP^{11}$  and has
dimension $7$ and degree $23$. We represent its defining equations by the matrix
\begin{equation}
\label{eq:fivebythree}
\begin{pmatrix}
\,x_{11} & \, x_{12} & \phantom{-}x_{21} x_{31}+x_{22} x_{32}+x_{23} x_{33} & \, x_{13} \, & y_{13} \,\,\\
\,x_{21} & \, x_{22}  & -x_{11} x_{31}-x_{12} x_{32}-x_{13} x_{33} &  \, x_{23} \, & y_{23} \,\,\\
\,x_{31} & \, x_{32}  & 0 & \, x_{33} \,& y_{33}\,\,
\end{pmatrix}.
\end{equation}
This matrix is derived by augmenting (\ref{eq:threebyfour}) with the $y$-column.
The prime ideal of $G''_{[v]}$ is generated by  all $3 \times 3$-minors  of (\ref{eq:fivebythree})
and the $2 \times 2$-minors in the last two columns.
The real points on this projective variety represent the relative position of two cameras, one with an
unknown focal length $f$, and the other with an unknown radial distortion parameter $\lambda$.

Each pair $(U_1,U_2)$
of image points gives a constraint (\ref{eq:0=x2TiKTEiKx1})
which translates into a linear equation (\ref{eq:cdotm})
on  $G''_{[v]} \cap L' \subset \PP^{11}$.
Here   $\V{m}^\top =  \left[
x_{11},x_{12}, x_{13}, y_{13},
x_{21},x_{22}, x_{23}, y_{23},
x_{31}, x_{32}, x_{33}, y_{33}
\right] $ is the vector of unknowns. Using notation
as in Subsection \ref{sec21}, the coefficient vector  
of the equation $\,\V{c}^\top\V{m} = 0\,$ is $\,
\V{c}^\top = 
\left[
u_2u_1,  u_2v_1,  u_2,  u_2 \|U_1 \|^2,
v_2u_1, v_2v_1, v_2, v_2 \|U_1 \|^2,
u_1, v_1, 1, \| U_1 \|^2
\right] $.

Seven  pairs determine a linear system 
$\M{C}\,\V{m}  \,\, =  \,\,0$ where 
the coefficient matrix $\M{C}$ has format $7 \times 12$.
 For general data, the matrix $\M{C}$ has full rank $7$.
The solution set is a $5$-dimensional linear subspace in $\RR^{12}$,
or, equivalently, a $4$-dimensional subspace $L'$ in $ \PP^{11}$.
The intersection $G''_{[v]} \cap L'$ consists of $23$ points. Our aim is to compute 
these fast and accurately. This is what is meant by the {\em minimal problem}
associated with the distortion variety $G''_{[v]}$.

\subsection{First build elimination template, then solve instances very fast}

We shall employ the method of {\em automatic generation of 
Gr\"obner solvers}. This  has already been applied with considerable success 
to a wide range of camera geometry problems in computer vision;
see e.g~\cite{Kukelova-ECCV-2008, Kukdiss}.
We start by computing a suitable basis 
$\{\V{n}_1,\V{n}_2,\V{n}_3,\V{n}_4,\V{n}_5\}$
for the null space of $\M{C}$ in $\RR^{12}$. We then introduce four unknowns
$\gamma_1,\ldots,\gamma_4$, and we substitute
\begin{eqnarray}
	\label{eq:null}
	\V{m} \,= \,
	 \gamma_1 \V{n}_1 + 
 \gamma_2 \V{n}_2 + 
  \gamma_3 \V{n}_3 + 
   \gamma_4 \V{n}_4 + 
    \V{n}_5 .
\end{eqnarray}
Our rank constraints on (\ref{eq:fivebythree}) translate
into ten equations in 
$\gamma_1,\gamma_2,\gamma_3,\gamma_4$. 
This system has $23$ solutions in $\CC^4$.
Our aim is to compute these within a few tens or hundreds of \underbar{microseconds}.

Efficient and stable Gr\"obner solvers are often based on {\em Stickelberger's Theorem} 
\cite[Theorem 2.6]{cbms},
which expresses the solutions as the joint eigenvalues of its
companion matrices. Let $I \subset \RR[\gamma]$ be the ideal  generated by our ten polynomials in  $ \gamma = (\gamma_1,\gamma_2,\gamma_3,\gamma_4)$.
The quotient ring $ \RR[\gamma]/I$ is isomorphic to $\RR^{23}$.
An $\RR$-vector space basis $B$ is given by the standard monomials
with respect to any Gr\"obner basis of $I$.
The multiplication map $M_i :  \RR[\gamma]/I \rightarrow  \RR[\gamma]/I$,
$f \mapsto f\gamma_i$ is $\RR$-linear.
Using the basis $B$, this becomes a $23 \times 23$-matrix.
The matrices $M_1,M_2,M_3,M_4$ commute pairwise. These 
are the {\em companion matrices}. As an $\RR$-algebra, 
$  \RR[M_1,M_2,M_3,M_4] \simeq \RR[\gamma]/I$.
Since $I$ is radical, there are $23$ linearly independent joint eigenvectors ${\bf x}$,
satisfying $M_i {\bf x} = \lambda_i {\bf x}$. 
The vectors $(\lambda_1,\lambda_2,\lambda_3,\lambda_4) \in \CC^4$
are the zeros of~$I$.

In practise, it suffices to  construct only one of the companion matrices $M_i$,
since we can recover  the  zeros of~$I$ from  eigenvectors ${\bf x}$ of  $M_i$.
Thus, our   primary   task   is    to   compute   either
$M_1,M_2,M_3$ or $M_4$   from   seven  point  correspondences
$(U_1,U_2)$ in a manner that is both very fast and numerically
stable. For this purpose, the {\em automatic generator} of 
Gr\"obner solvers~\cite{Kukelova-ECCV-2008, Kukdiss} is used.
We now explain this method and illustrate it for the f+E+$\lambda$ problem. 
  
 To achieve speed in computation, we exploit  that, for generic data, the
   Buchberger's algorithm always rewrites the input  polynomials in the  same way.
      The resulting Gr\"obner trace~\cite{Traverso-SAC-2005} is always the same.
     Therefore, we can construct a single trace for
  all generic systems  by tracing the construction  of a Gr\"obner
  basis of a single ``generic'' system.  This is done only once in
  an  \textit{off-line}   stage  of   solver  generation. It produces
  an \textit{elimination template}, which is  then reused again and again
    for efficient \textit{on-line}  computations on generic data.
  
  The \textit{off-line}  part of the solver generation is  a variant of the
  Gr\"obner  trace algorithm in~\cite{Traverso-SAC-2005}.
  Based on   the    F4  algorithm \cite{Faugere-JPAA-1999}   for a  particular  generic
  system, it produces an elimination template for constructing a Gr\"obner basis of  $\langle F \rangle$.
    The input polynomial system $F = \{ f_1,\ldots,f_{10}\}$  is written in the form $A\,m =0$, where $A$ is the 
  matrix of coefficients and $m$ is the vectors of monomials of the system. 
  Every Gr\"obner basis $G$ of $F$ can be constructed by Gauss-Jordan (G-J) elimination
   of a coefficient matrix $A_d$   derived from $F$ by multiplying each polynomial $f_i \in F$,
    by all monomials up to degree  $\max\left\{0,d-d_i\right\}$, where $d_i = \deg(f_i)$.

    To find an appropriate $d$, our solver generator starts with $d=\min\left\{d_i\right\}$, sets $m_d=m$, and G-J eliminates the matrix $A_{\min\left\{d_i\right\}}=A$. Then, it checks if a Gr\"obner basis $G$ has been generated. If not,     it increases $d$ by one, 
builds the next    $A_d$ and    $m_d$,
    and goes back to the check. This is repeated until a
    suitable $d$ and a Gr\"obner basis $G$ has been found.
    Often, we can remove some rows (polynomials) from $A_d$ at this stage 
    and form a smaller elimination template, denoted $A_d'$.
    For  this,   another heuristic optimization procedure is employed, 
   aimed at removing unnecessary polynomials and provide an efficient template leading from $F$ to 
   the reduced coefficient matrix $A_d'$.
     For a detailed description 
          see     \cite{Kukelova-ECCV-2008} and     \cite[Section  4.4.3]{Kukdiss}. 

In order to guide this process, we first precompute
  the reduced Gr\"obner basis of $I$, e.g.~w.r.t.\ grevlex ordering in {\tt Macaulay2}~\cite{M2},
  and the associated monomial basis $B$ of $\RR[\gamma]/I$.
  This has to be done in exact arithmetic over $\QQ$,
  which is computationally very  demanding,   due   to    the   coefficient
  growth~\cite{Arnold-JSC-2003}. We alleviate this problem   by  using   modular
  arithmetic~\cite{Faugere-JPAA-1999} or  by computing directly  in a  finite field  modulo   
  a  single ``lucky     prime  number''~\cite{Traverso-SAC-2005}.       For     many      practical
  problems~\cite{Byrod-CVPR-2008,Nister-PAMI-2004,Stewenius-CVPR-2005},
small   primes like 30011 or  30013 are sufficient.

    The output of this off-line algorithm is the elimination template for constructing
       $A_d^{\prime}$, i.e.\ the list of monomials multiplying each polynomial of $F$ to produce $A_d^{\prime}$ and $m_d^{\prime}$.
       The template is encoded as manipulations of sparse coefficient matrices. 
       After removing unnecessary rows and columns,
         the matrix $A_d^{\prime}$ has  size $s \times (s + |B|)$ for some $s$. The left $s \times s$-block is invertible. Multiplying
 $A_d^{\prime}$ by that inverse and extracting appropriate rows, one obtains the $|B| \times |B|$ matrix $M_1$
that represents the linear map  $ \RR[\gamma]/I \rightarrow  \RR[\gamma]/I, f \mapsto f \gamma_1$ 
in the basis $B$.

We applied  this off-line algorithm to the  f+E+$\lambda$ problem, with standard monomial basis
  $$ \begin{matrix}
  B \,\,=\,\, (1, \gamma_1, \gamma_1\gamma_3, \gamma_1\gamma_3\gamma_4, \gamma_1\gamma_4, \gamma_1\gamma_4^2, \gamma_2, \gamma_2\gamma_3, \gamma_2\gamma_3\gamma_4, \gamma_2\gamma_4, \gamma_2\gamma_4^2, \gamma_2\gamma_4^3, \gamma_3, \gamma_3^2, \gamma_3^3,
\\
\gamma_3^2\gamma_4, \gamma_3\gamma_4, \gamma_3\gamma_4^2, \gamma_3\gamma_4^3, \gamma_4, \gamma_4^2, \gamma_4^3, \gamma_4^4 ).
\end{matrix}
$$
Note that $|B| = 23$. The matrix (\ref{eq:fivebythree}) gives the
following ten ideal generators (with $d_1{=}d_2{=}d_3 {=} 2,
d_4{=}d_5{=}3, d_6 {=} \cdots {=} d_{10}  {=} 4$)
 for the variety $G''_{[u]}$  
encoding the f+E+$\lambda$ problem:
$$
\begin{matrix}
f_1&=& y_{23}x_{33}-x_{23}y_{33} \\
f_2&=& y_{13}x_{33}-x_{13}y_{33} \\
f_3&=& y_{13}x_{23}-x_{13}y_{23}\\
f_4&=& y_{13}x_{22}x_{31}-x_{12}y_{23}x_{31}-y_{13}x_{21}x_{32}+x_{11}y_{23}x_{32}+x_{12}x_{21}y_{33}-x_{11}x_{22}y_{33}\\
f_5&=& x_{13}x_{22}x_{31}-x_{12}x_{23}x_{31}-x_{13}x_{21}x_{32}+x_{11}x_{23}x_{32}+x_{12}x_{21}x_{33}-x_{11}x_{22}x_{33}\\
f_6&=& x_{11}y_{13}x_{31}x_{32}+x_{21}y_{23}x_{31}x_{32}+x_{12}y_{13}x_{32}^2+x_{22}y_{23}x_{32}^2-x_{11}x_{12}x_{31}y_{33}-x_{21}x_{22}x_{31}y_{33}\\
& &- x_{12}^2x_{32}y_{33}+x_{13}^2x_{32}y_{33}-x_{22}^2x_{32}y_{33}+x_{23}^2x_{32}y_{33}-x_{12}x_{13}x_{33}y_{33}-x_{22}x_{23}x_{33}y_{33}\\
& & \cdots \qquad \qquad \cdots \qquad \qquad \cdots \qquad \qquad \cdots \qquad \qquad \cdots  \\
f_{10}&=& x_{11}x_{12}x_{31}^2+x_{21}x_{22}x_{31}^2-x_{11}^2x_{31}x_{32}+x_{12}^2x_{31}x_{32}-x_{21}^2x_{31}x_{32}+x_{22}^2x_{31}x_{32}\\
&& \!\! - x_{11}x_{12}x_{32}^2-x_{21}x_{22}x_{32}^2+x_{12}x_{13}x_{31}x_{33}+x_{22}x_{23}x_{31}x_{33}-x_{11}x_{13}x_{32}x_{33} - x_{21}x_{23}x_{32}x_{33}
\end{matrix}
$$
Using (\ref{eq:null}), these are inhomogeneous 
polynomials in $\gamma_1,\gamma_2,\gamma_3,\gamma_4$.
In the off-line algorithm, we multiply $f_i$ by all monomials up to degree $5-d_i$ in these four variables.
Each of $f_1,f_2,f_3$ is multiplied by the $35$ monomials of degree $\leq 3$,
each of $f_4,f_5$ is multiplied by the $15$ monomials of degree $\leq 2$,
and each of $f_6,\ldots,f_{10}$ is multiplied by the $5$ monomials of degree $\leq 1$.
The resulting $160=10+105+30+25$ polynomials are written as a matrix $A_5$ with $160$ rows.
Only $103$ rows are needed to construct the matrix $M_1$. We conclude with an
 elimination template matrix $A_5^{\prime}$ of format  $103 \times  126$. For any data $C$,
 the on-line solver performs G-J elimination  on that  matrix, and it
computes the eigenvectors of a $23  \times 23$  matrix $M_1$.
 
  To  avoid   coefficient  growth   in  the  on-line   stage,  exact
  computations  over $\QQ$   are replaced  by approximate  computations  with  floating  point  numbers
  in $\RR$.  
  In a naive implementation, expected  cancellations may fail to occur due to
  rounding errors, thus leading to incorrect results.  This is not a problem 
    in our method because we follow the  precomputed elimination template:
    we use only matrix entries that were non-zero in the off-line stage. Still, replacing the 
    symbolic F4 algorithm with
    a numerical computation  may   lead    to   very    unstable  behavior. 
  
     It    has   been
  observed~\cite{Bujnak-PhD-2012}  that  different formulations,  term
  orderings, pair selection strategies, etc., can  have a dramatic effect
  on the stability and speed of the final solver. It is hence crucial to 
  validate  every solver  experimentally, by  simulations as well as on real data.

\subsection{Computational results}
\label{sec:synth}

A \textit{complete} solution, in the \textit{engineering sense},
to a minimal problem is a solution that is: 1) \underbar{fast} and 
2) \underbar{numerically stable} for most of the data
  that occur in practice. 
Moreover, for applications it is important to study the distribution of real solutions of the minimal solver. 

Minimal solvers are  often used inside RANSAC style loops~\cite{RANSAC}.
They form parts of  much larger systems, such as structure-from-motion and 3D reconstruction pipelines or localization systems. Maximizing the efficiency of these solvers is an essential task.
  Inside a RANSAC loop, all real zeros returned by the solver are seen as possible solutions to the problem.   The consistency w.r.t.\ all measurements is tested for each of them.
  Since that test may be computationally expensive, the study of the distribution of real solutions is important.

In this section we present graphs and statistics that display properties of the complete 
solution we offer for the f+E+$\lambda$ problem.  We  studied  the  performance of our
Gr\"obner solver on synthetically generated 3D scenes  with known ground-truth parameters.
We  generated  500,000  different   scenes  with  3D  points  randomly
distributed  in  a  cube  $[{-10},10]^{3}$  and  cameras  with  random
feasible  poses.  Each  3D point  was projected  by two  cameras. The
focal length $f$ of the left camera was drawn uniformly from the interval
$\left[0.5,2.5\right]$ and the  focal length of the right camera was
set to $1$.   The
orientations and positions  of the cameras were selected  at random so
as to look at the scene from  a random distance, varying from 20 to 40
from the center  of the scene.  Next, the image  projections in the right
camera  were  corrupted by  random  radial  distortion, following  the
one-parameter division model in~\cite{Fitzgibbon-CVPR-2001}.  The radial
distortion $\lambda$ was drawn uniformly from the interval $\left[-0.7,0\right]$.
The aim was to investigate  the behavior of the  algorithms for large as  well as
small amounts of radial distortion.

\paragraph{Computation and  its speed.}
 The proposed f+E+$\lambda$  solver performs the following steps:
 \begin{enumerate}
    \item  Fill  the  $103 \times 126$  elimination template matrix $A_5^{\prime}$   with  coefficients 
    derived from the input measurements. \vspace{-0.08in}
    \item Perform G-J elimination on the matrix $A_5^{\prime}$. \vspace{-0.08in}
        \item  Extract  the  desired coefficients  from  the  eliminated matrix. \vspace{-0.08in}
    \item Create the multiplication matrix from extracted coefficients. \vspace{-0.08in}
    \item Compute the eigenvectors of the multiplication matrix. \vspace{-0.08in}
     \item          Extract        $23$  complex      solutions          
       $(\gamma_1,\gamma_2,\gamma_3,\gamma_4)$ from the eigenvectors. \vspace{-0.08in}
     \item   For  each real solution    $\left(\gamma_1,\gamma_2,\gamma_3,\gamma_4\right)$,
       recover  the monomial vector   $\V{m}$ as in (\ref{eq:null}),
      the fundamental  matrix $\M{F}$, the  focal  length  $f$, and  the radial
       distortion      $\lambda$.
\end{enumerate}
All seven steps were implemented efficiently.
 The final f+E+$\lambda$  solver runs in less than $1ms$.

\label{sec:nstab}
\begin{figure}[h]
\centering
\begin{tabular}{cc}
\includegraphics[width=0.46\linewidth]{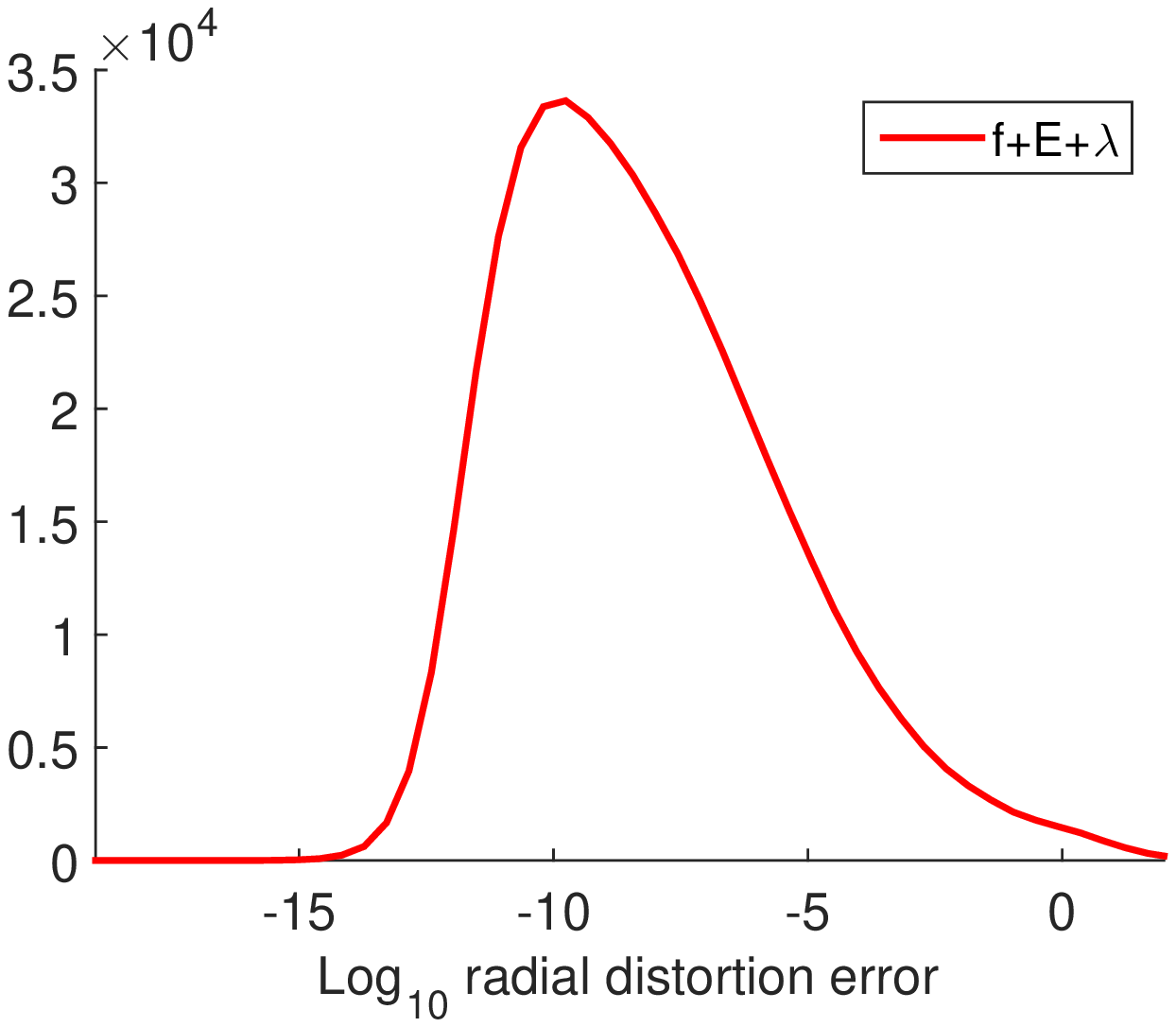}&\quad
\includegraphics[width=0.46\linewidth]{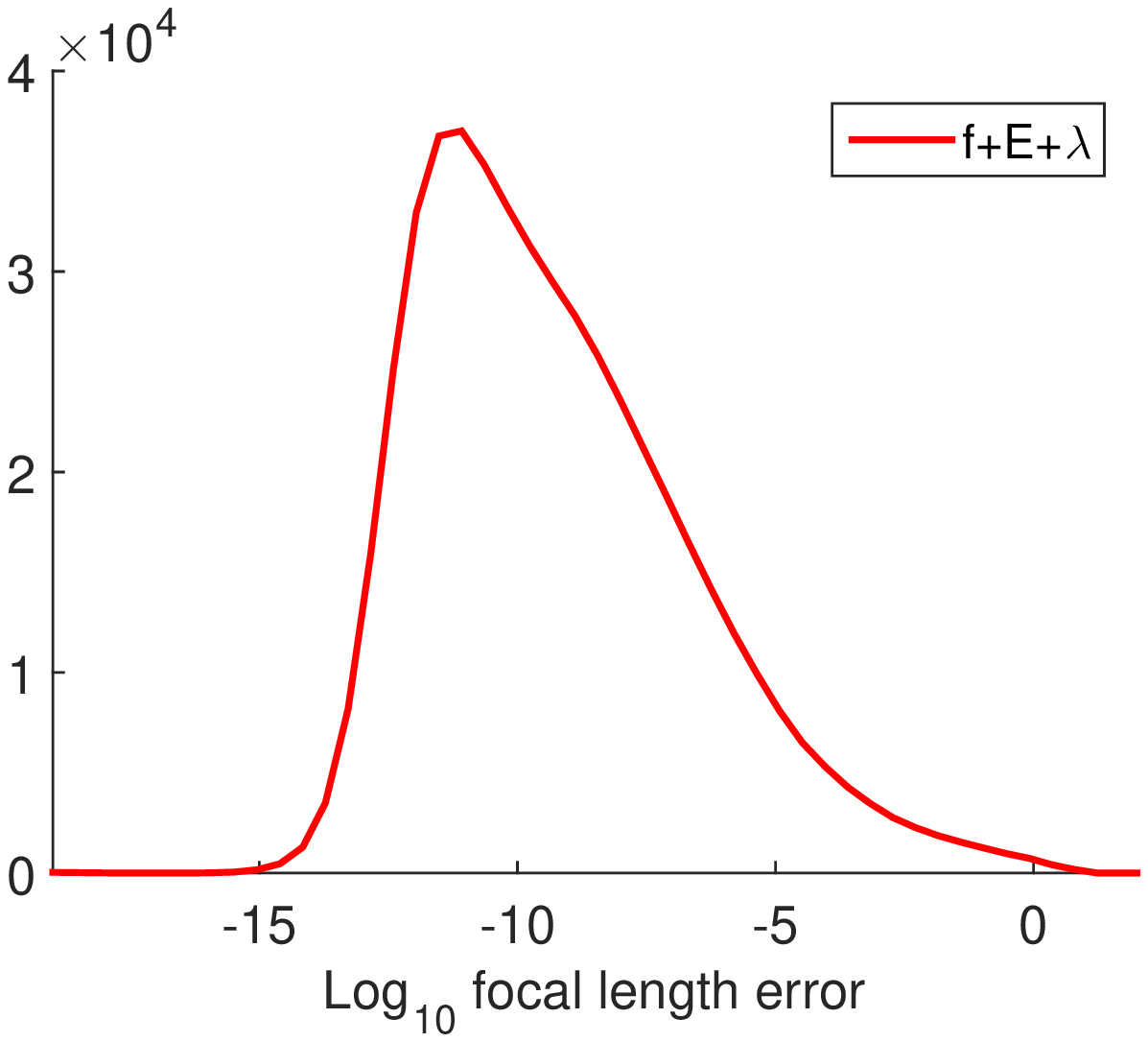}\\
(a)&(b) 
\end{tabular}
\caption{Numerical stability.  (a)  $\textrm{Log}_{10}$ of  the
 relative   error   of   the   estimated   radial   distortion.   (b)
  $\textrm{Log}_{10}$  of the  relative error  of the  estimated focal
  length.}
\label{fig:stability}
\end{figure}

\paragraph{Numerical stability.}
\noindent  
We studied  the behavior  of our solver on noise-free data.
Figure~\ref{fig:stability}(a) shows the experimental frequency of the base 10 logarithm of
the  relative  error  of  the radial  distortion  parameter  $\lambda$
estimated  using  the  new  f+E+$\lambda$ solver.  These  result  were
obtained  by selecting  the real  roots  closest to  the ground  truth
values. The results suggest that the   solver delivers
correct solutions  and its  numerical stability  is suitable  for real
word applications.

Figure~\ref{fig:stability}(b) shows the distribution of $\textrm{Log}_{10}$  of
the relative error of the  estimated focal length $f$.  Again these result
were obtained by selecting the real  roots closest to the ground truth
values. 
 Note  that the f+E+$\lambda$ solver  does not directly compute 
the focal length $f$. Its output is the monomial vector in $\V{m}$~(\ref{eq:null}),
from which we extract $\lambda$ and the  fundamental matrix  $\M{F} =
(x_{ij})$. To obtain the unknown focal  length from $\M{F}$, we use
   the  following formula:
   
\begin{lemma}
  Let $\V{X} = (x_{ij})_{1 \leq i, j \leq 3}$ be a generic point in the variety $G''$ from Example \ref{ex:more}.  Then there are exactly two pairs of essential matrix and focal length $(\V{E},f)$ such that $\V{X} = \textup{diag}(f^{-1},f^{-1},1) \V{E}$.  If one of them is $(\V{E},f)$ then the other is $(\textup{diag}(-1,-1,1)\V{E}, \, -f)$.  In particular, f is determined up to sign by $\V{X}$.  A formula to recover $f$ from $\V{X}$ is as follows:
\begin{eqnarray}
\label{eq:focal}
    f^2 \,\,= \,\, \frac{ x_{23}x_{31}^2+x_{23}x_{32}^{2}-2x_{21}x_{31}x_{33}-2x_{22}x_{32}x_{33}-x_{23}x_{33}^{2}}{2x_{11}x_{13}x_{21}+2x_{12}x_{13}x_{22}-x_{11}^{2}x_{23}-x_{12}^{2}x_{23}+x_{13}^{2}x_{23}+x_{21}^{2}x_{23}+x_{22}^{2}x_{23}+x_{23}^3}.
\end{eqnarray}
\end{lemma}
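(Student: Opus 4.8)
The plan is to turn the statement into a one-variable linear algebra problem by feeding the defining equations of the essential variety into the parametrization of $G''$. First I would unwind the definitions in Examples~\ref{ex:Gprime} and~\ref{ex:more}: the variety $G'$ is the closure of the set of matrices $\mathbf{E}\,\mathrm{diag}(f^{-1},f^{-1},1)$ with $\mathbf{E}$ essential and $f\neq 0$, and $G''$ is the image of $G'$ under the transpose map. Since the Demazure equations $2\,\mathbf{E}\mathbf{E}^{\top}\mathbf{E}-\mathrm{trace}(\mathbf{E}\mathbf{E}^{\top})\,\mathbf{E}=0$ (and $\det\mathbf{E}=0$) are invariant under $\mathbf{E}\mapsto\mathbf{E}^{\top}$, the variety $G''$ is exactly the closure of the matrices $\mathrm{diag}(f^{-1},f^{-1},1)\,\mathbf{E}$ with $\mathbf{E}$ essential and $f\neq 0$, which is the description in the statement. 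This already shows that a pair $(\mathbf{E},f)$ lying over a given $\mathbf{X}$ is determined by $f$ alone, via $\mathbf{E}=\mathrm{diag}(f,f,1)\,\mathbf{X}$, so the task reduces to counting the scalars $f$ for which $\mathrm{diag}(f,f,1)\,\mathbf{X}$ is essential.

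Next I would substitute $\mathbf{E}=D\mathbf{X}$ with $D=\mathrm{diag}(f,f,1)$ into the Demazure equations. Because $D$ is symmetric and invertible, one computes $\mathbf{E}\mathbf{E}^{\top}\mathbf{E}=D\,\mathbf{X}\mathbf{X}^{\top}D^{2}\mathbf{X}$ and $\mathrm{trace}(\mathbf{E}\mathbf{E}^{\top})=\mathrm{trace}(\mathbf{X}\mathbf{X}^{\top}D^{2})$, so factoring out the invertible $D$ on the left, the condition becomes equivalent to
\[
2\,\mathbf{X}\mathbf{X}^{\top}D^{2}\mathbf{X}\;-\;\mathrm{trace}(\mathbf{X}\mathbf{X}^{\top}D^{2})\,\mathbf{X}\;=\;0 .
\]
The key observation is that $D^{2}=\mathrm{diag}(f^{2},f^{2},1)$ depends \emph{affine-linearly} on the single quantity $t:=f^{2}$ — writing $D^{2}=t\,I-(t-1)\,e_{3}e_{3}^{\top}$ makes this explicit — so the left-hand side equals $A+t\,B$ for two $3\times 3$ matrices $A,B$ whose entries are concrete polynomials in the $x_{ij}$ (built from inner products of the rows of $\mathbf{X}$). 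Since $\det(\mathrm{diag}(f,f,1)\mathbf{X})=f^{2}\det\mathbf{X}=0$ automatically for $\mathbf{X}\in G''$, and rank exactly $2$ holds for generic $\mathbf{X}$, the requirement that $\mathrm{diag}(f,f,1)\mathbf{X}$ be essential becomes the nine scalar linear equations $A_{ij}+t\,B_{ij}=0$ in the one unknown $t$.

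I would then show that on a dense open subset of $G''$ this linear system has a unique solution. By definition of $G''$ there is at least one essential matrix $\mathrm{diag}(f_{0},f_{0},1)\mathbf{X}$, so $t=f_{0}^{2}$ solves the system, which is therefore consistent. The vector $(B_{ij})$ cannot vanish identically on $G''$: otherwise $(A_{ij})$ would vanish there too, so $\mathrm{diag}(f,f,1)\mathbf{X}$ would be essential for every $f$, making the fibre positive-dimensional and contradicting that $\dim G''=6$ equals $\dim(\text{essential variety})+1$. Hence on the open locus of $G''$ where some $B_{ij}\neq 0$ the solution is unique and equal to $-A_{ij}/B_{ij}$; taking the $(2,3)$ entry, with $r_{i}$ the rows of $\mathbf{X}$, gives $A_{23}=2(r_{2}\cdot r_{3})x_{33}-(r_{3}\cdot r_{3})x_{23}$ and $B_{23}=2(r_{1}\cdot r_{2})x_{13}+(r_{2}\cdot r_{2}-r_{1}\cdot r_{1})x_{23}$, and a short expansion identifies $-A_{23}/B_{23}$ with the right-hand side of~(\ref{eq:focal}). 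Finally, for generic $\mathbf{X}$ we have $t\neq 0$, so $f=\pm\sqrt{t}$ gives exactly two values; the two matrices $\mathbf{E}=D\mathbf{X}$ and $\mathrm{diag}(-f,-f,1)\mathbf{X}=\mathrm{diag}(-1,-1,1)\,\mathbf{E}$ are both genuinely essential, since one of them is the original $\mathrm{diag}(f_{0},f_{0},1)\mathbf{X}$ and the other is its image under left multiplication by $\mathrm{diag}(-1,-1,1)$, which preserves rank and passes through the Demazure expression because $\mathrm{diag}(-1,-1,1)^{2}=I$. This yields precisely the two pairs in the statement and no others, with $f$-values negatives of each other.

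The main obstacle I anticipate is the genericity bookkeeping rather than any isolated hard step: one must pin down a single dense open subset of $G''$ on which the nine equations in $t$ are consistent, $B_{23}\neq 0$ so that the displayed formula is valid, $t=f^{2}\neq 0$ so that the two pairs are distinct, and $\mathrm{diag}(f,f,1)\mathbf{X}$ is nonzero of rank $2$ so that it is genuinely essential and not merely a zero of the Demazure polynomials. Each of these conditions is Zariski-open and nonempty — the last because $G''$ is by construction the closure of the image of honest pairs $(\mathbf{E},f)$ — and the only somewhat delicate point is checking that the affine-linear reduction in $t$ is, on this open set, equivalent to the geometric condition of being an essential matrix.
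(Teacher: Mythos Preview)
Your proof is correct and takes a genuinely different route from the paper's. The paper's argument is a computer verification: it forms the ideal of the graph of $(\mathbf{E},f)\mapsto\mathrm{diag}(f^{-1},f^{-1},1)\,\mathbf{E}$ inside $\QQ[e_{ij},f,x_{ij}]$, eliminates the $e_{ij}$ in \texttt{Macaulay2}, and then checks that the polynomial obtained by clearing the denominator in~(\ref{eq:focal}) lies in the elimination ideal $I\cap\QQ[f,x_{ij}]$. You instead substitute $\mathbf{E}=\mathrm{diag}(f,f,1)\,\mathbf{X}$ directly into the Demazure cubics, cancel the invertible left factor $\mathrm{diag}(f,f,1)$, and observe that the remaining nine conditions are affine-linear in $t=f^{2}$ because $\mathrm{diag}(f,f,1)^{2}=tI-(t-1)e_{3}e_{3}^{\top}$; reading off the $(2,3)$ entry then \emph{derives} the formula rather than verifying it, and your expansion of $-A_{23}/B_{23}$ does match~(\ref{eq:focal}) term for term. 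Your linearity observation also gives a structural explanation for why $f^{2}$ is unique and hence why there are exactly two pairs, a point the paper's elimination leaves implicit. The trade-off is that the paper's proof is two lines long and outsources the genericity issues to the irreducibility of the elimination ideal, whereas your argument is self-contained and computer-free but, as you note, requires tracking the open locus on which $B_{23}\neq 0$, $t\neq 0$, and $\mathrm{diag}(f,f,1)\mathbf{X}$ has rank exactly~$2$; each of these is a nonempty Zariski-open condition on the irreducible variety $G''$, so their intersection is dense and your argument goes through.
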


\begin{proof}
Consider the map $E \times \CC^{*} \rightarrow \PP^8$, $(\V{E}, f) \mapsto \textup{diag}(f^{-1},f^{-1},1) \V{E}$.
Let $I \subset \QQ[e_{ij}, f, x_{ij}]$ be the ideal of the graph of this map.  So, $I$ is generated by the ten Demazure cubics and the nine entries of $\V{X} - \textup{diag}(f^{-1},f^{-1},1) \V{E}$.  
We computed the elimination ideal $I \cap \QQ[f, x_{ij}]$ in \texttt{Macaulay2}.
The polynomial gotten by clearing the denominator and subtracting the RHS from the LHS in the formula
(\ref{eq:focal})  lies in this elimination ideal.
This proves the lemma. \end{proof}

\begin{figure}[h]
\centering
\begin{tabular}{cc}
\includegraphics[width=0.45\linewidth]{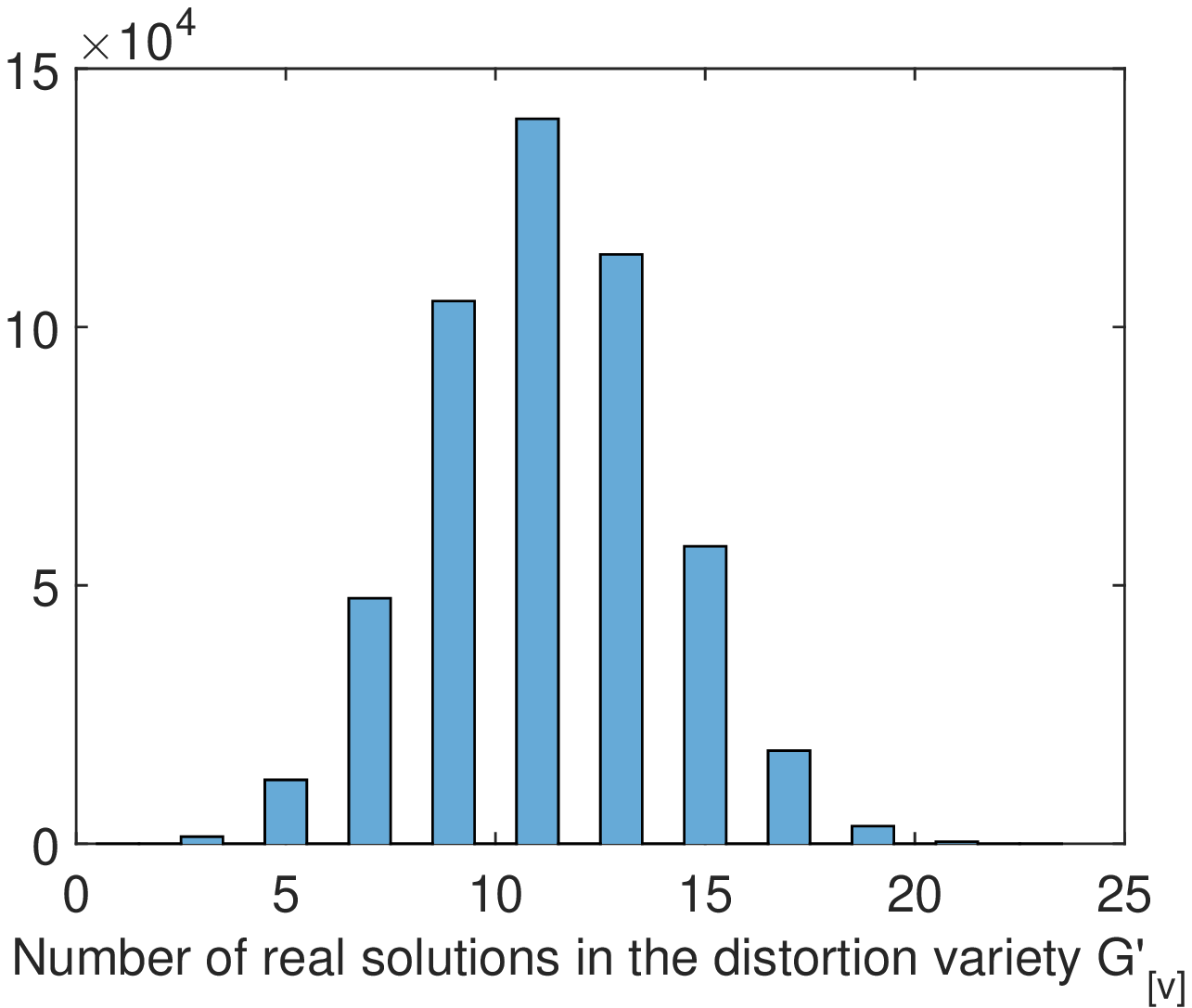}& \quad
\includegraphics[width=0.45\linewidth]{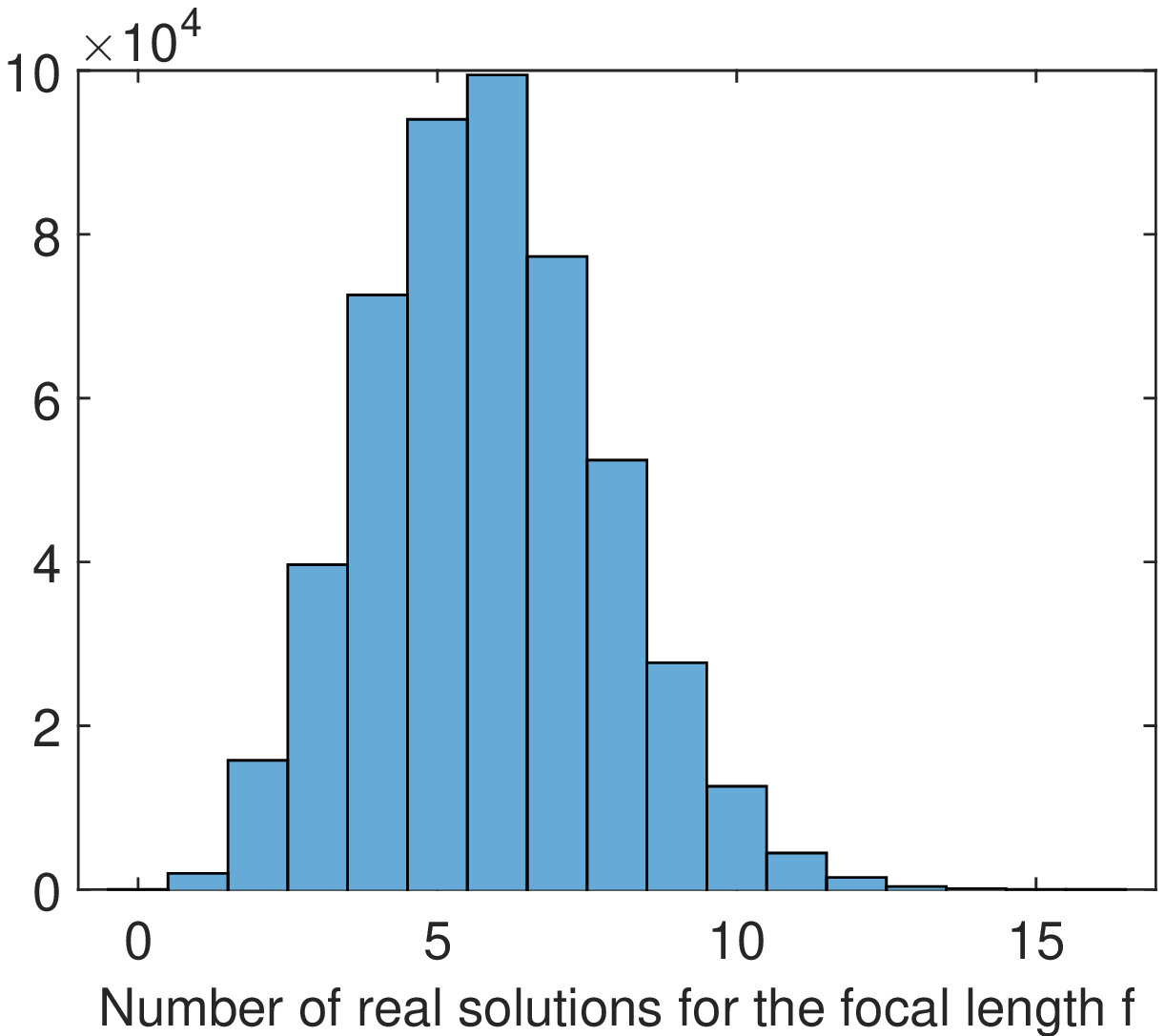}\\
(a) & (b) 
\end{tabular}
\caption{Number  of real solutions for  floating point computation
  with noise-free image data.}
\label{fig:hist1}
\end{figure}

\paragraph{Counting real solutions.}\label{sec:r_sol}
In the  next experiment we studied  the distribution of the  number of
real solutions $(\lambda,F)$ and the number of real
solutions for the focal length $f$.

Figure~\ref{fig:hist1} (a) shows  the histogram of the  number of real
solutions on the  distortion variety $G''_{[v]}$.  All
odd integers between $1$  and $23$  were observed. Most of  the time we got
an odd  number of real solutions  between $7$ and $15$.  The empirical probabilities
 are in Table~\ref{tab:real_sol_var}.
\begin{table}[h]
\footnotesize
\centering
\begin{tabular}{|c||c|c|c|c|c|c|c|c|c|c|c|c|}
\hline
real roots &  & & & & & & & & & & & \\
in $G''_{[v]}$ &  1 & 3 & 5 & 7 & 9 & 11 & 13 & 15 & 17 & 19 & 21 & 23  \\
\hline 
$\%$ & 0.003 & 0.276 & 2.47 & 9.50 & 21.0 & 28.0 & 22.8 & 11.5& 3.60 & 0.681 & 0.078 & 0.003\\
\hline 
\end{tabular}
\caption{Percentage of the number  of real solutions in the distortion variety $G''_{[v]}$.}
\label{tab:real_sol_var}
\end{table}

Figure~\ref{fig:hist1} (b) shows  the histogram of the  number of 
solutions for  the focal length $f$, computed from the
distortion  variety  $G''_{[v]}$  using  the  formula~(\ref{eq:focal}).
Of the $46$ complex solutions, at most $23$ could be real and positive. The largest number
of positive real solutions $f$ observed in  in 500,000 runs  was $16$.
The empirical probabilities
from this experiment are in Table~\ref{tab:real_sol_f}.
\begin{table}[htp]
\footnotesize
\centering
\begin{tabular}{|c||c|c|c|c|c|c|c|c|c|c|c|c|c|c|c|c|}\hline
real $f$ &  0 & 1 & 2 & 3 & 4 & 5 & 6 & 7 & 8 &  9 & 10 & 11 \\\hline 
$\%$ &  0.003 & 0.397 & 3.16 & 7.93 & 14.5 & 18.8 & 19.9 & 15.5 & 10.5 & 5.54 & 2.52 & 0.894\\\hline\hline
real $f$ & 12 & 13 & 14 & 15 & 16 &&&&&&&\\\hline
$\%$ & 0.295 &0.075   & 0.023  &  0.005 &  0.001 &&&&&&& \\\hline
\end{tabular}
\caption{Percentage of  the number of positive real  roots for the
    focal length $f$.}
\label{tab:real_sol_f}
\end{table}

We  performed  the same  experiment with  image  measurements
corrupted by Gaussian noise  with the standard  deviation set  to 2
pixels.   
The distribution of the real roots in the  distortion variety $G''_{[v]}$ 
was very similar to the distribution for noise-free data. 
The main difference  between these result and those
 for noise-free data was in the number of real values  for the
focal length  $f$. For  a fundamental matrix  corrupted by  noise, the
formula~(\ref{eq:focal})  results in  no  real  solutions more  often.
See Tables \ref{tab:real_sol_var2} and \ref{tab:real_sol_f2} for the
empirical probabilities.

\begin{table}[h]
\footnotesize
\centering
\begin{tabular}{|r||c|c|c|c|c|c|c|c|c|c|c|c|}\hline
real roots  &  1 & 3 & 5 & 7 & 9 & 11 & 13 & 15 & 17 & 19 & 21 & 23\\\hline 
$\%$ & 0.021 &  0.509 & 3.23 & 11.2 & 22.4 & 27.7 & 21.1 & 10.1 & 3.07 & 0.566 & 0.062 & 0.004\\\hline
\end{tabular}
\caption{Percentage of  the  number   of  real  solutions  in  the
  distortion variety $G''_{[v]}$ for  image measurements corrupted with
  Gaussian noise with $\sigma = 2$ pixels.
\label{tab:real_sol_var2}}
  \bigskip
 \end{table}
 \begin{table}[h]
\footnotesize
\centering
\begin{tabular}{|c||c|c|c|c|c|c|c|c|c|c|c|c|}\hline
real $f$ & 0 & 1 & 2 & 3 & 4 & 5 & 6 & 7 & 8 & 9 & 10 & 11 \\\hline 
$\%$ & 0.243 & 1.30 & 4.92 & 10.2 & 16.1 & 19.0 & 18.5 & 13.7 & 8.79 & 4.33 & 1.96 & 0.689\\\hline\hline
real $f$ & 12 & 13 & 14 & 15 & 16 &&&&&&&\\\hline
$\%$ & 0.217 & 0.048 & 0.015 & 0.002 & 0.001 &&&&&&&\\\hline
\end{tabular}
\caption{Percentage of the number  of real roots for the focal
  length $f$ with data as in Table~\ref{tab:real_sol_var2}.
\label{tab:real_sol_f2}}
\end{table}

Finally, we  performed the  same experiments for a  special camera
motion. 
It  is known~\cite{Newsam-1996, Sturm-2001}  
that  the   focal  length  cannot  be
determined  by the formula~(\ref{eq:focal}) from the fundamental  matrix if the  optical axes
are parallel to each other, e.g.\ for a sideways motion of cameras.
Therefore, we generated cameras  undergoing
``close-to-sideways  motion''.    To model this
scenario,   100   points   were   again    placed   in   a   3D   cube
$\left[-\textrm{10},\textrm{10}\right]^\textrm{3}$.     Then   500,000
different camera pairs were generated such that both cameras
were  first  pointed   in  the  same  direction   (optical  axes  were
intersecting at infinity) and then translated laterally. Next, a small
amount of  rotational noise  of 0.01 degrees  was introduced  into the
camera  poses   by  right-multiplying   the  projection   matrices  by
respective rotation matrices. This multiplication slightly rotated the
optical axes of  cameras (as not to intersect at  infinity) as well as
simultaneously displaced the camera  centers.


The results for noise-free  data are 
displayed in         Tables~\ref{tab:real_sol_var_sideways}
and~\ref{tab:real_sol_f_sideways}.  For  this
special  close-to-sideways motion, 
the formula (\ref{eq:focal}) provides up to $20$ real solutions
for  the focal length~$f$.

\begin{table}[h]
\footnotesize
\centering
\begin{tabular}{|r||c|c|c|c|c|c|c|c|c|c|c|c|}\hline
real roots &  1 & 3 & 5 & 7 & 9 & 11 & 13 & 15 & 17 & 19 & 21 & 23\\\hline 
$\%$ & 0.007 & 0.544 & 5.14 & 16.83 & 26.2 & 24.9 & 16.2 & 7.37 & 2.30 & 0.475 & 0.061 & 0.006\\\hline
\end{tabular}
\caption{Real solutions  in  the
distortion   variety   $G''_{[v]}$   for  the close-to-sideways   motion
 scenario.
\label{tab:real_sol_var_sideways}}
\smallskip
\footnotesize
\centering
\begin{tabular}{|c||c|c|c|c|c|c|c|c|c|c|c|c|}\hline
real $f$ & 0 & 1 & 2 & 3 & 4 & 5 & 6 & 7 & 8 & 9 & 10 & 11 \\\hline 
$\%$ & 0.006 & 0.755 & 3.08 & 10.2 & 12.9 & 20.9 & 16.2 & 16.0 & 8.73 & 6.17 & 2.61 & 1.58\\\hline\hline
real $f$ & 12 & 13 & 14 & 15 & 16 & 17 & 18 & 19 & 20 &&&\\\hline
$\%$ & 0.556 & 0.253 & 0.086 & 0.033 & 0.011 & 0.0044 & 0.0016 & 0.0012 & 0.0002 &&&\\\hline
\end{tabular}
\caption{\small{Real solutions for the focal length $f$ in the close-to-sideways motion scenario.}
\label{tab:real_sol_f_sideways}}
\end{table}

\bigskip

\begin{small}
\noindent
{\bf Acknowledgement.}\smallskip \\
 This project started at the {\em Algebraic Vision} workshop (May 2016)
at the American  Institute of Mathematics (AIM) in  San Jose. 
 We are grateful to the organizers,  Sameer Agarwal, Max Lieblich and Rekha Thomas, 
 for bringing us together.  Joe Kileel and Bernd Sturmfels were supported by the
 US National Science Foundation  (DMS-1419018). 
Zuzana Kukelova was supported by the Czech Science Foundation (GACR P103/12/G08).
Part of this study was carried out while she worked for Microsoft Research, Cambridge, UK.
Tomas Pajdla was supported by H2020-ICT-731970 LADIO.
 \end{small}

\medskip

\begin{small}

\end{small}

\bigskip \medskip \bigskip

\noindent
\footnotesize {\bf Authors' addresses:}

\smallskip

\noindent Joe Kileel and Bernd Sturmfels,
University of California, Berkeley, USA,
{\tt \{jkileel,bernd\}@berkeley.edu}

\noindent Zuzana Kukelova and Tomas Pajdla,
Czech Technical University in Prague,
{\tt \{kukelova,pajdla\}@cvut.cz}

\end{document}